\numberwithin{equation}{section}
\title{Extendability of differential forms via Cartier operators}
\author{Tatsuro Kawakami}
\email{tatsurokawakami0@gmail.com}
\address{Department of Mathematics, Graduate School of Science, Kyoto University, Kyoto 606-8502, Japan}
\def\phi{\varphi}
\def\epsilon{\varepsilon}
\def\log{\operatorname{log}}
\def\Spec{\operatorname{Spec}}
\def\Supp{\operatorname{Supp}}
\def\m{{\mathfrak m}}
\newcommand{\Q}{\mathbb{Q}}
\newcommand{\Z}{\mathbb{Z}}
\newcommand{\sO}{\mathcal{O}}
\newcommand{\mydot}{{{\,\begin{picture}(1,1)(-1,-2)\circle*{2}\end{picture}\ }}}
\theoremstyle{plain}
\newtheorem{thm}{Theorem}[section] 
\newtheorem{prop}[thm]{Proposition}
\newtheorem{lem}[thm]{Lemma}
\theoremstyle{definition} 
\newtheorem{defn}[thm]{Definition}
\newtheorem{conv}[thm]{Convention}
\newtheorem{eg}[thm]{Example} 
\theoremstyle{remark}
\newtheorem{rem}[thm]{Remark}
\newtheorem{defn and notation}[thm]{Definition and Notation}
\theoremstyle{plain}
\newtheorem{theo}{Theorem}
\keywords{Differential forms; Cartier operator; Singularities}
\subjclass[2020]{14F10,13A35,14B05}
\begin{document}
\tolerance = 9999

\maketitle
\markboth{Tatsuro Kawakami}{Extension theorem via Cartier operators}

\begin{abstract}
Let $X$ be a normal variety over a perfect field of positive characteristic and $B$ a reduced divisor on $X$. 
We prove that if the Cartier isomorphism on the log smooth locus of $(X,B)$ extends to the entire $X$, then $(X,B)$ satisfies the logarithmic extension theorem for differential forms.
As an application, we show that the logarithmic extension theorem holds for good quotients of smooth varieties by actions of reduced linearly reductive group schemes. In addition, the logarithmic extension theorem for one-forms holds for singularities of higher codimension under assumptions about Serre's condition.
We also prove that tame quotients satisfy the regular extension theorem.
\end{abstract}

\section{Introduction}
Differential forms play an essential role in the study of algebraic varieties. 
Let $X$ be a normal variety over a perfect field, $f\colon Y\to X$ a resolution of singularities, and $j\colon U\hookrightarrow X$ the inclusion of the smooth locus.
It is important to ask if every $i$-form on $U$ extends to an $i$-form on $Y$.
This is equivalent to saying that the restriction map 
\[
f_{*}\Omega^{i}_Y\hookrightarrow \Omega^{[i]}_X\coloneqq j_{*}\Omega^{i}_{U}
\]
is an isomorphism, or that $f_{*}\Omega^{i}_Y$ is reflexive.
We often consider extendability admitting a logarithmic pole along the reduced $f$-exceptional divisor as well.
We define precise terminologies in a more general setting.

\begin{defn}[Regular and logarithmic extension theorem]\label{def:log ext thm}
Let $X$ be a normal variety over a perfect field, $B$ a reduced divisor on $X$, and $i\geq 0$ an integer. 
\begin{enumerate}
\item[\textup{(1)}] We say that $X$ satisfies the \textit{regular extension theorem for $i$-forms} if, for any proper birational morphism $f\colon Y\to X$ from a normal variety $Y$, the restriction map
\[
f_{*}\Omega^{[i]}_Y\hookrightarrow \Omega^{[i]}_X
\]
is an isomorphism.
\item[\textup{(2)}] We say that $(X,B)$ satisfies the \textit{logarithmic extension theorem for $i$-forms} if, for any proper birational morphism $f\colon Y\to X$ from a normal variety $Y$, the restriction map
\[
f_{*}\Omega^{[i]}_Y(\log f^{-1}_{*}B+E)\hookrightarrow \Omega^{[i]}_X(\log B)
\]
is an isomorphism, where $E$ is the reduced $f$-exceptional divisor. We refer to Subsection \ref{subsection:Notation and terminologies} for detailed notation.
\end{enumerate}
If the regular (resp.~logarithmic) extension theorem for $i$-forms holds for all $i\geq 0$, then we simply say the \textit{regular (resp.~logarithmic) extension theorem holds}. 
When we do not distinguish the regular and logarithmic extension theorems, we simply call them the \textit{extension theorem}.
The extension theorem is a local property on $X$.
\end{defn}

The study of the extension theorem has a long history for $X$ defined over the field of complex numbers.
We refer to \cite{Ohsawa,Flenner88,Graf21,SvS85} for singularities of higher codimension and \cite{GKK,GKKP,KS21,Namikawa} for singularities appearing in the minimal model program for example.

On the other hand, when $X$ is defined over a perfect field of characteristic $p>0$, very little are known.
Graf \cite{Gra} recently proved that the logarithmic extension theorem holds for two-dimensional log canonical singularities in $p>5$ through insightful analysis of the dual graphs of their minimal resolutions. However, due to the dependence on the classification of such singularities, a direct generalization to higher-dimensional cases appears challenging.

In this paper, our focus is on the extension theorem for higher-dimensional varieties over a perfect field of positive characteristic. Previous studies on the extension theorem relied on (mixed) Hodge theory, the minimal model program, the existence of log resolutions, and vanishing theorems. However, these geometric tools either cannot be assumed or are currently unknown for higher-dimensional varieties in positive characteristic.

To address this challenge, we propose a purely algebraic approach to the extension theorem in positive characteristic.

\subsection{Extendability of the Cartier isomorphism and differential forms}
Firstly, we provide a sufficient condition for the logarithmic extension theorem using the Cartier operator.
Let $X$ be a normal variety over a perfect field of positive characteristic, $B$ a reduced divisor on $X$, and $j\colon U\hookrightarrow X$ the inclusion of the log smooth locus $U$ of $(X,B)$.
We define locally free $\sO_U$-modules by
 \begin{align*}
    &B^{i}_U(\log B)\coloneqq \mathrm{Im}(F_{*}d \colon F_{*}\Omega^{i-1}_U(\log B)\to F_{*}\Omega^{i}_U(\log B))\,\,\text{and}\\
    &Z^{i}_U(\log B)\coloneqq \mathrm{Ker}(F_{*}d \colon F_{*}\Omega^{i}_U(\log B) \to F_{*}\Omega^{i+1}_U(\log B))
\end{align*}
for all $i\geq 0$, where $F_{*}d$ denotes the pushforward of the differential map by the Frobenius morphism.
There exists the exact sequence
\[
0 \to B^{i}_U(\log B) \to Z^{i}_U(\log B) \xrightarrow{C^{i}_{U, B}} \Omega^{i}_U(\log B)\to 0
\]
resulting from the $i$-th (logarithmic) Cartier isomorphism for all $i\geq 0$ (\cite[Theorem 7.2]{Kat70}).
The map $C^{i}_{U, B}$ is called the \textit{$i$-th Cartier operator}.
We define reflexive $\sO_X$-modules by $B_X^{[i]}(\log B)\coloneqq j_{*}B^{i}_U(\log B)$ and $Z_X^{[i]}(\log B)\coloneqq j_{*}Z^{i}_U(\log B)$.
The \textit{$i$-th reflexive Cartier operator}
\[
C^{[i]}_{X, B}\colon Z_X^{[i]}(\log B)\to \Omega_X^{[i]}(\log B)
\]
is defined as $j_{*}C^{i}_{U,B}$.
We prove that if the $i$-th Cartier isomorphism 
\[
C^{i}_{U, B}\colon Z^{i}_U(\log B)/B^{i}_U(\log B)\xrightarrow{\cong}\Omega^{i}_U(\log B)
\] on $U$ extends to $X$, then every logarithmic $i$-form on $U$
extends to a reflexive logarithmic $i$-form on any proper birational model $Y$ above $X$.

\begin{theo}[\textup{cf.~Theorem \ref{thm:generalized ext thm}}]\label{Introthm:ext thm}
Let $X$ be a normal variety over a perfect field of positive characteristic and $B$ a reduced divisor on $X$. 
We fix an integer $i\geq 0$.
If the $i$-th reflexive Cartier operator 
\[
C^{[i]}_{X, B}\colon Z_X^{[i]}(\log B)\to \Omega_X^{[i]}(\log B)
\]
is surjective, then $(X,B)$ satisfies the logarithmic extension theorem for $i$-forms.
\end{theo}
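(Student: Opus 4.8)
The plan is to reduce the logarithmic extension theorem for $i$-forms to the surjectivity of the reflexive Cartier operator by an inductive argument on the order of the pole along the exceptional divisor, using the Cartier operator to "peel off" Frobenius powers. Fix a proper birational morphism $f\colon Y\to X$ from a normal variety $Y$ with reduced exceptional divisor $E$, and write $B_Y \coloneqq f^{-1}_*B + E$. Since everything is a question about reflexive sheaves on the normal variety $X$ and $f$ is an isomorphism over a big open subset, I may work with the pushforward $f_*\Omega^{[i]}_Y(\log B_Y)$, which is a reflexive $\sO_X$-module contained in $\Omega^{[i]}_X(\log B)$; the content is the reverse inclusion. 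Because reflexivity lets me check the statement in codimension one and then over the log smooth locus $U$, and because $f$ is an isomorphism there, it suffices to show: for every proper birational $f$ and every local section $\omega$ of $\Omega^{[i]}_X(\log B)$, the pullback $f^*\omega$ (a priori a rational logarithmic form on $Y$) actually lies in $\Omega^{[i]}_Y(\log B_Y)$, i.e. has at worst log poles along $E$.

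The key mechanism is the following. Suppose $f^*\omega$ has a pole of order $> 1$ along some component $E_0$ of $E$. Locally near the generic point of $E_0$ the variety $Y$ is regular, so I can apply the (honest, non-reflexive) Cartier operator theory there. Choose $N$ so that $F^N_* f^*\omega$, after clearing enough powers, can be written so that its "bad part" lies in the image of $d$, hence is killed by the Cartier operator; more precisely, I want to use that a logarithmic form with a high-order pole along $E_0$, when it comes from downstairs via $\omega\in\Omega^{[i]}_X(\log B)$, can be expressed (locally on $X$, using a local parameter $t$ cutting out a prime divisor whose strict transform is $E_0$, possibly after further blow-up) in a form to which the Cartier operator applies. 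The hypothesis that $C^{[i]}_{X,B}$ is \emph{surjective} on $X$ means every local section $\omega$ of $\Omega^{[i]}_X(\log B)$ is $C^{[i]}_{X,B}(\eta)$ for some local section $\eta$ of $Z^{[i]}_X(\log B)$. Then $f^*\omega$ and $C^i$ applied to $f^*\eta$ (on the regular locus of $Y$, which is big) agree, and the pole order of a Cartier-operator image along a divisor is controlled by — indeed strictly less than, up to the factor $p$ — the pole order of the input, by the explicit local formula for $C$ in terms of a $p$-basis adapted to $E_0$. Iterating: write $\omega = C^{[i]}_{X,B}(\eta_1)$, then $\eta_1$ is a section of $Z^{[i]}_X(\log B) \subseteq F_*\Omega^{[i]}_X(\log B)$, apply surjectivity again to the "$F_*$-downstairs" form, and so on; after finitely many steps the pole order of the corresponding form on $Y$ drops to $\le 1$, and then applying $C^i$ the requisite number of times on $Y$ brings $f^*\omega$ itself into $\Omega^i_Y(\log B_Y)$ over the big open regular locus of $Y$, hence globally by reflexivity.

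Concretely the steps are: (1) reduce to showing $f^*\omega \in \Omega^{[i]}_Y(\log B_Y)$ for $\omega$ a local section of $\Omega^{[i]}_X(\log B)$, using reflexivity and that $f$ is an isomorphism in codimension one over $U$; (2) localize at the generic point of a component $E_0$ of $E$, where $Y$ is a regular (in fact, up to étale base change, a "standard") local model and the classical logarithmic Cartier operator is available with an explicit local formula; (3) set up the pole-order bookkeeping: define $m(\omega, E_0)$ to be the order of pole of $f^*\omega$ along $E_0$ and show that if $\omega = C^{[i]}_{X,B}(\eta)$ with $\eta$ a local section of $Z^{[i]}_X(\log B)$, then $m$ is governed by $m$ of the corresponding downstairs form with the pole order going down after we compensate by the Frobenius twist — the crucial inequality being that $C$ turns a pole of order $a$ into a pole of order $\lceil a/p\rceil$ or $\lfloor a/p\rfloor$ (depending on conventions) along $E_0$; (4) iterate surjectivity of $C^{[i]}_{X,B}$ finitely many times to drive the pole order down to $1$; (5) conclude $f^*\omega$ has only log poles along every component of $E$, hence lies in $\Omega^{[i]}_Y(\log B_Y)$, which gives the reverse inclusion and the isomorphism.

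The main obstacle I expect is step (3)–(4): making precise the interaction between the \emph{reflexive} Cartier operator $C^{[i]}_{X,B}$ on the singular $X$ and the \emph{honest} Cartier operator on the regular locus of $Y$, and showing that pulling back along $f$ is compatible with $C$ in the appropriate sense so that the pole-order drop can be iterated. Subtleties: (a) the form $\eta$ produced by surjectivity lives in $Z^{[i]}_X(\log B) = j_*Z^i_U(\log B)$ and one must check its pullback to the regular locus of $Y$ indeed lands in the corresponding $Z^i$ sheaf there (this is automatic by reflexivity and agreement in codimension one, but needs to be said); (b) one must ensure the process terminates, i.e. that the pole order is a well-defined nonnegative integer that strictly decreases — here the bound needs to beat the possibility that the pole order is merely divided by $p$ but a $\log$ term is created; tracking the exact local expression in a $p$-basis adapted to $E_0$ (a local parameter $t$ for $E_0$ together with $p$-independent units) is what resolves this. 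I would also need the easy direction (that $f_*\Omega^{[i]}_Y(\log B_Y) \hookrightarrow \Omega^{[i]}_X(\log B)$ is injective with reflexive source), which is standard. Everything else is formal manipulation of reflexive sheaves on normal varieties.
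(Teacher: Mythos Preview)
Your strategy—iterate the Cartier operator to drive down the pole order of $f^*\omega$ along $E$—is exactly the paper's. The paper, however, organizes it sheaf-theoretically rather than section-by-section, and this dissolves the two obstacles you flagged. Concretely: the paper fixes an auxiliary $\Q$-divisor $A_Y$ supported on $E$ with $\lfloor A_Y\rfloor=0$, chooses $n\gg0$ so that the coefficients of $p^nA_Y$ exceed every possible pole, and gets $f_*\Omega^{[i]}_Y(\log B_Y)(p^nA_Y)\cong\Omega^{[i]}_X(\log B)$ for free. Then, instead of verifying compatibility of $C$ with $f^*$ at generic points of $E$, it pushes forward the exact sequence $0\to Z^{[i]}_Y\to F_*\Omega^{[i]}_Y\to B^{[i+1]}_Y$ and uses an $(S_2)/(S_1)$ depth argument to conclude $f_*Z^{[i]}_Y((\log B_Y)(p^nA_Y))\cong Z^{[i]}_X(\log B)$ directly; a commutative square with the surjective $C^{[i]}_{X,B}$ then forces $f_*\Omega^{[i]}_Y(\log B_Y)(p^{n-1}A_Y)\cong\Omega^{[i]}_X(\log B)$, and one descends to $n=0$. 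The fractional $A_Y$ also repairs a subtle point in your bookkeeping: with integer twists the iteration $a\mapsto\lceil a/p\rceil$ stalls at $a=1$, leaving you in $\Omega^{[i]}_Y(\log B_Y)(E)$ rather than $\Omega^{[i]}_Y(\log B_Y)$; the paper's use of Hara's twisted Cartier operator with $\Q$-coefficients makes the terminal step land exactly where needed, since $\lfloor A_Y\rfloor=0$.
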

It is important to note that the proof of Theorem \ref{Introthm:ext thm} does not require the existence of resolutions of singularities. Additionally, the surjectivity of the first reflexive Cartier operator is also a necessary condition for the logarithmic extension theorem for one-forms when $X$ has a log resolution and only rational singularities (see Proposition \ref{prop:Converse direction}).

\subsection{\texorpdfstring{$F$}--liftability and quotients by linearly reductive group schemes}\label{subsection:finite corver}
A normal variety $X$ over a perfect field $k$ of positive characteristic is said to be \textit{locally $F$-liftable} if there exists an open covering $\{U_i\}_{i\in I}$ such that every $U_i$ admits a flat lift to the ring $W_2(k)$ of Witt vectors of length two, along with its Frobenius morphism (see Definition \ref{def:F-lift}).
For normal locally $F$-liftable varieties, the surjectivity of the reflexive Cartier operators can be verified, and consequently, they satisfy the logarithmic extension theorem by Theorem \ref{Introthm:ext thm} (see Theorem \ref{thm:ext thm for F-lift}).
Achinger--Witaszek--Zdanowicz \cite{AWZ2} proved that good quotients of smooth varieties by actions of reduced linearly reductive group schemes are $F$-liftable.
Furthermore, we will show in Theorem \ref{thm:F-liftable singularities} that quotient singularities by (possibly non-reduced) finite linearly reductive group schemes, which have been studied in detail by Liedtke--Martin--Matsumoto \cite{LMM}, are also $F$-liftable.
Summarizing, the following theorem holds.

\begin{theo}\label{Introthm:linearly reductive quotient}
Let $X$ be a normal variety over a perfect field $k$ of positive characteristic and $B$ a reduced divisor on $X$. 
If $(X,B)$ is locally $F$-liftable, then $(X,B)$ satisfies the logarithmic extension theorem. 
In particular, 
\begin{enumerate}
    \item[\textup{(1)}] a good quotient of a smooth variety over $k$ by an action of a reduced linearly reductive group scheme and
    \item[\textup{(2)}] a variety over $k$ with quotient singularities by (possibly non-reduced) finite linearly reductive group schemes
\end{enumerate}
satisfy the logarithmic extension theorem. 
\end{theo}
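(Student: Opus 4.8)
The plan is to derive the theorem from Theorem~\ref{Introthm:ext thm} by using an $F$-lift to manufacture a \emph{section} of the reflexive Cartier operator. Since the logarithmic extension theorem is a local property on $X$ (Definition~\ref{def:log ext thm}) and $(X,B)$ is locally $F$-liftable, I would first replace $X$ by one member of the defining open covering, so that $(X,B)$ itself admits a flat lift $(\tilde X,\tilde B)$ over $W_2(k)$ together with a lift $\tilde F\colon \tilde X\to\tilde X$ of the Frobenius compatible with the (log) structure. Write $j\colon U\hookrightarrow X$ for the log smooth locus of $(X,B)$. Over $U$ the lift $\tilde U$ is log smooth over $W_2(k)$, so $\Omega^1_{\tilde U/W_2(k)}(\log\,\tilde B)$ is locally free, hence $p$-torsion free, and $\tilde F$ restricts to $\tilde U$ (the underlying space being unchanged by Frobenius).

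Next I would run the Deligne--Illusie construction on $U$. Because $\tilde F$ reduces to the Frobenius and is compatible with the log structure, the induced map $\tilde F^{\sharp}\colon \tilde F^{*}\Omega^1_{\tilde U}(\log\,\tilde B)\to\Omega^1_{\tilde U}(\log\,\tilde B)$ vanishes modulo $p$; dividing by $p$ (legitimate by $p$-torsion freeness) and reducing modulo $p$ gives an $\sO_U$-linear map $\varphi\colon F^{*}\Omega^1_U(\log\,B)\to\Omega^1_U(\log\,B)$ whose image consists of closed forms. By the $(F^{*},F_{*})$-adjunction, $\varphi$ corresponds to a map $\psi\colon \Omega^1_U(\log\,B)\to Z^1_U(\log\,B)$, and a local computation in logarithmic coordinates (write $\tilde F^{\sharp}(x_i)=x_i^{p}+p\,h_i$, and correspondingly on dlog-parameters) shows $C^1_{U,B}\circ\psi=\mathrm{id}_{\Omega^1_U(\log\,B)}$. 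Since $\Omega^{i}_U(\log\,B)=\bigwedge^{i}\Omega^1_U(\log\,B)$ on $U$, the wedge power $\psi^{i}:=\bigwedge^{i}\psi\colon \Omega^{i}_U(\log\,B)\to Z^{i}_U(\log\,B)$ is well defined (wedges of closed forms are closed), and by multiplicativity of the Cartier operator, $C^{i}_{U,B}(\alpha\wedge\beta)=C^{a}_{U,B}(\alpha)\wedge C^{b}_{U,B}(\beta)$ for $a+b=i$, one gets $C^{i}_{U,B}\circ\psi^{i}=\mathrm{id}$.

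Applying $j_{*}$ produces $j_{*}\psi^{i}\colon \Omega^{[i]}_X(\log\,B)\to Z^{[i]}_X(\log\,B)$ with $C^{[i]}_{X,B}\circ j_{*}\psi^{i}=j_{*}(C^{i}_{U,B}\circ\psi^{i})=\mathrm{id}$; hence $C^{[i]}_{X,B}$ is surjective for every $i$, and Theorem~\ref{Introthm:ext thm} yields the logarithmic extension theorem for $i$-forms. Letting $i$ vary proves the first assertion. For the two ``in particular'' cases: a good quotient of a smooth variety by a linearly reductive group is $F$-liftable by Achinger--Witaszek--Zdanowicz \cite{AWZ2}, and a variety with quotient singularities by a finite linearly reductive (possibly non-reduced) group scheme is locally $F$-liftable by Theorem~\ref{thm:F-liftable singularities}; in either case (with the relevant reduced divisor, e.g.\ $B=0$) the hypothesis of the first assertion is satisfied, so the logarithmic extension theorem holds.

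The main obstacle is the middle step: one must verify that $\tilde F^{\sharp}$ is divisible by $p$ \emph{including} on the logarithmic part, which is exactly where compatibility of $\tilde F$ with the log structure (forcing $\tilde F^{\sharp}$ on log parameters to be $p$-th power times a principal unit of the form $1+p(\cdot)$) is used, and that the adjoint $\psi$ genuinely lands in the sheaf $Z^1_U(\log\,B)$ of logarithmically closed forms. A subtler conceptual point worth emphasizing is that $C^{i}_{U,B}$ is \emph{always} surjective on the log smooth locus, yet its pushforward $C^{[i]}_{X,B}=j_{*}C^{i}_{U,B}$ need not be; what rescues the argument is that a \emph{section} of $C^{i}_{U,B}$, unlike mere surjectivity, survives application of $j_{*}$.
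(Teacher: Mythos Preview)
Your proposal is correct and follows essentially the same route as the paper: the paper reduces to the $F$-liftable case, invokes \cite[Proposition~3.2.1 and Variant~3.2.2]{AWZ} to obtain a splitting of $C^{i}_{U,B}$ on the log smooth locus (this is precisely the Deligne--Illusie construction you unpack by hand), pushes forward by $j$ to get a splitting of $C^{[i]}_{X,B}$, and then applies Theorem~\ref{Introthm:ext thm}; for the ``in particular'' clauses it cites Theorems~\ref{thm:AWZ} and~\ref{thm:F-liftable singularities} just as you do. Your closing remark that a \emph{section}, unlike mere surjectivity, survives $j_{*}$ is exactly the point.
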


\begin{rem}\label{rem:linearly reductive quotient}\,
\begin{enumerate}
\item[\textup{(1)}] Two-dimensional $F$-regular singularities are quotient singularities by finite linearly reductive group schemes by \cite[Theorem 1.6]{LMM}.
 Therefore, they satisfy the logarithmic extension theorem by Theorem \ref{Introthm:linearly reductive quotient}.
\item[\textup{(2)}] In Theorem \ref{Introthm:linearly reductive quotient} (1) and (2), the assumption of the linearly reductive property is necessary. For instance, consider a rational double point of type $E_8^2$ in characteristic two, which is a quotient singularity by the cyclic group scheme $\mathbf{C}_2$. This singularity violates the logarithmic extension theorem, as proved in Example \ref{example:C_2-quotient}.
\item[\textup{(3)}] In Theorem \ref{Introthm:linearly reductive quotient} (1) and (2), the regular extension theorem cannot be expected. For example, consider a $\frac{1}{p}(1,1)$-singularity for every prime number $p$. This singularity is a quotient singularity by the finite linearly reductive group scheme $\boldsymbol{\mu}_p$. However, it violates the regular extension theorem, as shown in \cite[Proposition 1.5]{Langer19} or \cite[Example 10.2]{Gra}. Additionally, a weighted projective space $\mathbb{P}(1:1:p)$, which is a good quotient of $\mathbb{A}^{3}\setminus\{0\}$ by an action of the reduced linearly reductive group scheme $\mathbb{G}_m$, has a $\frac{1}{p}(1,1)$-singularity.
\item[\textup{(4)}] When the base field is the field of complex numbers, Heuver \cite[Theorem 1]{Heuver} proved that the regular extension theorem holds for a good quotient $X$ of a smooth variety by an action of a reductive group scheme when $\dim X\leq 4$. Subsequently, Kebekus--Schnell \cite[Corollary 1.7]{KS21} resolved it in all dimensions by proving a more general statement.
\end{enumerate}
\end{rem}

Graf \cite{Gra} has already proven that two-dimensional $F$-regular singularities satisfy the logarithmic extension theorem in a different way (see also \cite[Theorem 4.7]{Kaw1}).
In his paper, he explores the reasons behind the good behavior of $F$-regularity when discussing the logarithmic extension theorem. Theorems \ref{Introthm:ext thm} and \ref{Introthm:linearly reductive quotient} provide an answer to his question.

As mentioned in Remark \ref{rem:linearly reductive quotient} (3), the regular extension theorem cannot be expected in Theorem \ref{Introthm:linearly reductive quotient} (1) and (2).
However, it holds for quotients by finite group schemes order prime to $p$.
In fact, the following theorem holds.

\begin{theo}\label{Introthm:tame quotient}
Let $X$ be a normal variety over a perfect field of characteristic $p>0$.
If there exists a finite surjective morphism $g\colon X' \to X$ of degree prime to $p$ from a smooth variety $X'$, then $X$ satisfies the regular extension theorem. 
\end{theo}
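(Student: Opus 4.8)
The plan is to reduce the regular extension theorem for $X$ to a statement we already control, namely the fact that smooth varieties satisfy the extension theorem trivially, and to transport differential forms along the tame cover $g\colon X'\to X$ using a trace/averaging argument. First I would fix a proper birational morphism $f\colon Y\to X$ from a normal variety $Y$ and form the normalization $Y'$ of the fiber product $Y\times_X X'$, giving a commutative diagram with finite morphisms $g_Y\colon Y'\to Y$ (of degree prime to $p$, being dominated by a component of degree dividing $\deg g$) and a proper birational morphism $f'\colon Y'\to X'$; since $X'$ is smooth it satisfies the (regular) extension theorem, so $f'_*\Omega^{[i]}_{Y'}\hookrightarrow \Omega^{[i]}_{X'}=\Omega^i_{X'}$ is an isomorphism. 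The goal is then to descend this isomorphism along $g$.

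The key mechanism is the existence of a trace map for tame (degree prime to $p$) morphisms between normal varieties: for a finite surjective $g\colon X'\to X$ of degree $d$ prime to $p$, composing the pullback $g^*\colon \Omega^{[i]}_X\to (g_*\Omega^{[i]}_{X'})$ with the field-trace $\operatorname{Tr}_{K(X')/K(X)}$ (which lands in $\Omega^{[i]}_X$ because $\Omega^{[i]}_X$ is reflexive and the composite is regular in codimension one) and dividing by $d$ gives a splitting $\tfrac1d\operatorname{Tr}\circ g^*=\mathrm{id}$ on $\Omega^{[i]}_X$. The same construction works fiberwise upstairs, yielding a splitting of $\Omega^{[i]}_Y\hookrightarrow (g_{Y*}\Omega^{[i]}_{Y'})$ compatible with $f$, $f'$, and the two trace maps. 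Concretely, I would check the commutativity of the square relating $g^*$ on $X$ to $g_Y^*$ on $Y$ (this holds generically, hence everywhere by reflexivity of all sheaves involved), and likewise for the trace maps, so that the isomorphism $f'_*\Omega^{[i]}_{Y'}\cong \Omega^{[i]}_{X'}$ sandwiched between $g^*$ and $\tfrac1d\operatorname{Tr}$ forces $f_*\Omega^{[i]}_Y\hookrightarrow \Omega^{[i]}_X$ to be an isomorphism: injectivity is automatic, and surjectivity follows because any reflexive section $\omega$ of $\Omega^{[i]}_X$ pulls back to a section of $\Omega^{[i]}_{X'}$ that extends over $Y'$, and applying the trace to that extension and dividing by $d$ produces the desired extension of $\omega$ over $Y$ (using that $g_{Y*}$ of an honest $i$-form on $Y'$ has trace an honest $i$-form on $Y$, $Y$ being normal and the trace being regular in codimension one).

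I expect the main obstacle to be the careful bookkeeping around the normalized fiber product $Y'$: one must verify that $g_Y\colon Y'\to Y$ is again finite surjective of degree prime to $p$ (it is étale in codimension one over the tame locus, but $Y'$ need not be smooth, only normal, which is why we invoke $X'$ smooth rather than $Y'$ smooth, and apply the extension theorem for the smooth $X'$ rather than for $Y'$), and that $f'\colon Y'\to X'$ is indeed proper birational so that the smooth-case extension theorem applies on the $X'$ side. A secondary technical point is justifying that the trace map $\operatorname{Tr}_{K(Y')/K(Y)}$ sends $f'_*\Omega^i_{Y'}$ into $f_*\Omega^i_Y$ and is compatible with restriction to $X$ versus $Y$; this is a standard consequence of the functoriality of the trace on Kähler differentials for separable extensions, combined with the reflexivity of the sheaves, but it must be stated cleanly. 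Once these compatibilities are in place, the conclusion is a short diagram chase, and since the argument is local on $X$ and uses no resolution of singularities, it fits the paper's algebraic framework.
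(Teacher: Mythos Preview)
Your proposal is correct and follows essentially the same route as the paper: form the normalized base change, use that the smooth cover $X'$ satisfies the regular extension theorem, and descend via a reflexive trace map that splits the pullback because $\deg g$ is prime to $p$. The paper's argument differs only in bookkeeping---it takes the normalization of the \emph{dominant component} of $Y\times_X X'$ (so that $f'\colon Y'\to X'$ is genuinely birational and $\deg g_Y=\deg g$), builds the trace via duality $\Omega^i\cong\sHom(\Omega^{d-i},\omega)$ rather than the field trace, and isolates as a separate lemma the fact (which you call ``trivial'') that a variety with $\Omega^i_X$ satisfying $(S_2)$, in particular a smooth variety, satisfies the regular extension theorem via the pullback $\Omega^i_{X'}\to f'_*\Omega^i_{Y'}\to f'_*\Omega^{[i]}_{Y'}$.
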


\subsection{Singularities of higher codimension}
Next, we consider the case where the singular locus has higher codimension.
Let $X$ be a normal variety over a perfect field of positive characteristic and $B$ a reduced divisor on $X$
such that the non-log smooth locus of $(X, B)$ has codimension at least three.
In this case, we can show that the reflexive Cartier operator $C^{[i]}_{X,B}\colon Z_X^{[i]}(\log B)\to \Omega_X^{[i]}(\log B)$ is surjective if the reflexive sheaf $B^{[i]}_X(\log B)$ satisfies Serre's condition $(S_3)$.
Determining whether $B^{[i]}_X(\log B)$ satisfies $(S_3)$ is generally not easy.
However, when $i=1$, the reflexive sheaf $B^{[1]}_X(\log B)$ coincides with the cokernel of the Frobenius map on the smooth locus of $X$, and we can find certain conditions under which $B^{[1]}_X(\log B)$ satisfies $(S_3)$.

\begin{theo}\label{Introthm:Flenner type ext thm}
Let $X$ be a normal variety over a perfect field of positive characteristic and $B$ a reduced divisor on $X$. 
Suppose that the non-log smooth locus of $(X, B)$ has codimension at least three.
In addition, we assume that one of the following conditions holds:
\begin{enumerate}
    \item[\textup{(1)}] The singular locus of $X$ has codimension at least four and $X$ satisfies $(S_4)$.  
    \item[\textup{(2)}] $X$ is $F$-injective and satisfies $(S_3)$.
    \end{enumerate}
Then $(X,B)$ satisfies the logarithmic extension theorem for one-forms.
\end{theo}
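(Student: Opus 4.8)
The plan is to reduce everything, via Theorem~\ref{Introthm:ext thm} with $i=1$, to the surjectivity of the first reflexive Cartier operator $C^{[1]}_{X,B}\colon Z^{[1]}_X(\log B)\to\Omega^{[1]}_X(\log B)$. Let $j\colon U\hookrightarrow X$ be the inclusion of the log smooth locus and $Z\coloneqq X\setminus U$, so that $\codim_X Z\ge 3$. Applying $j_*$ to the exact sequence $0\to B^1_U(\log B)\to Z^1_U(\log B)\xrightarrow{C^1_{U,B}}\Omega^1_U(\log B)\to 0$ and recalling $Z^{[1]}_X(\log B)=j_*Z^1_U(\log B)$, $\Omega^{[1]}_X(\log B)=j_*\Omega^1_U(\log B)$, and $B^{[1]}_X(\log B)=j_*B^1_U(\log B)$, we obtain $\Coker(C^{[1]}_{X,B})\hookrightarrow R^1 j_*B^1_U(\log B)\cong\mathcal{H}^2_Z\bigl(B^{[1]}_X(\log B)\bigr)$, the isomorphism coming from the local cohomology triangle $R\Gamma_Z\to\mathrm{id}\to Rj_*j^*$. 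We may assume $\dim X\ge 3$, since otherwise $Z=\emptyset$, $(X,B)$ is log smooth, and $C^{[1]}_{X,B}$ is the ordinary Cartier isomorphism; then $B^{[1]}_X(\log B)$ has full support, so if it satisfies Serre's condition $(S_3)$ we get $\operatorname{depth}\bigl(B^{[1]}_X(\log B)\bigr)_z\ge\min(3,\dim\sO_{X,z})=3$ for every $z\in Z$, hence $\mathcal{H}^2_Z(B^{[1]}_X(\log B))=0$ and $C^{[1]}_{X,B}$ is surjective. Thus it suffices to prove that $B^{[1]}_X(\log B)$ is $(S_3)$ — the case $i=1$ of the assertion made just before the theorem.

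The first step is to identify $B^{[1]}_X(\log B)$ with a global Frobenius cokernel. Write $V\coloneqq X_{\reg}$, $\Sigma\coloneqq X\setminus V$, and $j_V\colon V\hookrightarrow X$. Since the non-snc locus of $B$ inside $V$ has codimension $\ge 2$, we have $B^{[1]}_X(\log B)=j_{V*}B^1_V$, where $B^1_V=\Coker(F^\#\colon\sO_V\to F_*\sO_V)$ is locally free. Pushing $0\to\sO_V\to F_*\sO_V\to B^1_V\to 0$ forward by $j_V$ and using $j_{V*}\sO_V=\sO_X$ (normality), $j_{V*}F_*\sO_V=F_*\sO_X$ (as $F$ is finite and $F^{-1}(V)=V$, so $(j_V)_*(F|_V)_*=F_*(j_V)_*$), and $R^1 j_{V*}\sO_V\cong\mathcal{H}^2_\Sigma(\sO_X)$, we get an exact sequence $0\to\sO_X\to F_*\sO_X\to B^{[1]}_X(\log B)\to\mathcal{H}^2_\Sigma(\sO_X)$. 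In both cases~(1) and~(2), $\Sigma\subseteq Z$ has codimension $\ge 3$ and $\sO_X$ satisfies $(S_3)$, hence $\mathcal{H}^2_\Sigma(\sO_X)=0$, so
\[
0\to\sO_X\xrightarrow{F^\#}F_*\sO_X\to B^{[1]}_X(\log B)\to 0
\]
is exact on all of $X$.

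Now I would read off $(S_3)$ of $\mathcal{B}\coloneqq B^{[1]}_X(\log B)$ from this sequence, using that $F_*\sO_X$ has the same depth at every point as $\sO_X$ (finiteness of $F$, together with the fact that $(a_1,\dots,a_r)$ and $(a_1^p,\dots,a_r^p)$ have the same radical). In case~(2), $F$-injectivity of $X$ — which localizes, $X$ being $F$-finite — forces every connecting map $\mathcal{H}^{i-1}_{\mathfrak{p}}(\mathcal{B})\to\mathcal{H}^i_{\mathfrak{p}}(\sO_X)$ to vanish, so $0\to\mathcal{H}^i_{\mathfrak{p}}(\sO_X)\to\mathcal{H}^i_{\mathfrak{p}}(F_*\sO_X)\to\mathcal{H}^i_{\mathfrak{p}}(\mathcal{B})\to 0$ is exact for every prime $\mathfrak{p}$ and every $i$; combined with $(S_3)$ of $\sO_X$ this yields $\operatorname{depth}\mathcal{B}_{\mathfrak{p}}\ge\min(3,\dim\sO_{X,\mathfrak{p}})$. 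In case~(1), a prime $\mathfrak{p}$ of codimension $\le 3$ lies in $V$, where $\mathcal{B}$ is locally free, hence Cohen--Macaulay; at a prime $\mathfrak{p}$ of codimension $\ge 4$, $(S_4)$ of $\sO_X$ gives $\mathcal{H}^i_{\mathfrak{p}}(\sO_X)=\mathcal{H}^i_{\mathfrak{p}}(F_*\sO_X)=0$ for $i\le 3$, whence $\mathcal{H}^i_{\mathfrak{p}}(\mathcal{B})=0$ for $i\le 2$ and $\operatorname{depth}\mathcal{B}_{\mathfrak{p}}\ge 3$. Either way $\mathcal{B}$ is $(S_3)$, and Theorem~\ref{Introthm:ext thm} concludes.

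The step I expect to be the real obstacle is case~(1): because $F$-injectivity is \emph{not} available there, the Frobenius short exact sequence cannot be used to control $\mathcal{H}^{\bullet}_{\mathfrak{p}}(\mathcal{B})$ at singular points directly, so the argument instead trades one unit of Serre's condition (using $(S_4)$ rather than $(S_3)$) against one unit of codimension of $\Sigma$; making this trade consistently — and simultaneously pinning down the identification $B^{[1]}_X(\log B)=\Coker(F^\#)$ over all of $X$ — is where the care is needed. The first-paragraph reduction and case~(2) are comparatively formal once the local-cohomology machinery is set up.
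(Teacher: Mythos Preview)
Your proof is correct and follows the same route as the paper (Lemmas~\ref{lem:B:S_3toC:surj} and~\ref{prop:surjectivity of higher codimension case} feeding into Theorem~\ref{Introthm:ext thm}): reduce via Theorem~\ref{Introthm:ext thm} to surjectivity of $C^{[1]}_{X,B}$, reduce that to $(S_3)$ of $B^{[1]}_X(\log B)=B^{[1]}_X$, and verify $(S_3)$ from the Frobenius sequence $0\to\sO_X\to F_*\sO_X\to B^{[1]}_X$. The only cosmetic differences are that the paper deduces surjectivity from $(S_3)$ via the depth lemma applied to $\Im(C^{[1]}_{X,B})$ rather than via $R^1j_*\cong\mathcal{H}^2_Z$, and works with $\mathcal{B}\coloneqq\Im(F_*\sO_X\to B^{[1]}_X)$ first (concluding $\mathcal{B}=B^{[1]}_X$ a posteriori from $(S_2)$) rather than establishing right-exactness of the Frobenius sequence up front.
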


For example, $F$-rational singularities satisfy the condition (2).
It is interesting to compare Theorem \ref{Introthm:Flenner type ext thm} with Graf's counterexamples to Flenner's extension theorem in positive characteristic, which show that Serre's condition is essential.

\begin{thm}[\textup{cf.~\cite[Theorem 3]{Graf21}, see also Example \ref{eg:counterex to Flenner:non-Cohen-Macaulay case}}]\label{thm:Graf's example}
Let $p$ be a prime number and $d\geq 3$ an integer.
Then there exists a normal variety $X$ over a perfect field of characteristic $p$ of $\dim X=d$ that has only an isolated singularity and violates the logarithmic extension theorem for one-forms.
\end{thm}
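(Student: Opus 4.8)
The plan is to construct explicit cones over suitable projective varieties in characteristic $p$, so that the extension theorem for one-forms fails at the cone vertex. First I would reduce to the case $d=2$ by observing that if $(X_0, o)$ is a normal surface singularity violating the logarithmic extension theorem for one-forms, then a cone-like construction — for instance, $X = X_0 \times \mathbb{A}^{d-2}$ localized at a point, or iterated cones — still has an isolated singularity in dimension $d$ and still violates the extension theorem for one-forms; the key point is that a one-form on the smooth locus of $X_0$ not extending to a resolution pulls back to give the same obstruction on $X$, and products with smooth factors do not create new obstructions. So the heart of the matter is the surface case.

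For the surface case, the natural source of counterexamples is precisely the one flagged in Remark \ref{rem:linearly reductive quotient}(2): take $(X, o)$ to be a rational double point of type $E_8^2$ in characteristic two (or more generally, following Graf's analysis of non-$F$-regular two-dimensional log canonical singularities — e.g.\ certain wild quotient or non-taut singularities in small characteristic), for which Example \ref{example:C_2-quotient} exhibits a logarithmic one-form on the smooth locus that does not extend across the minimal resolution with log poles along the exceptional divisor. Since this is an isolated surface singularity, the non-log smooth locus is the single point $o$. To handle general primes $p$, I would exhibit, for each $p$, a specific normal surface singularity — for instance a cyclic quotient singularity of the form $\frac{1}{p^2}(1, q)$ with carefully chosen $q$, or a cone over an appropriate curve — whose minimal resolution has a dual graph / Hodge-theoretic obstruction to extension; the computation reduces to a local calculation of $f_* \Omega^{[1]}_Y(\log E)$ versus $\Omega^{[1]}_X$ near the vertex, which can be done via the explicit toric or monomial structure.

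The key steps, in order, are: (i) fix the surface construction depending on $p$ and identify its minimal resolution $f\colon Y \to X$ with reduced exceptional divisor $E$; (ii) compute $\Omega^{[1]}_X$ near the singular point explicitly (e.g.\ in terms of invariant differentials or monomials on the normalization), and exhibit a specific section $\omega$; (iii) show $\omega$ does \emph{not} lie in the image of $f_* \Omega^{[1]}_Y(\log E) \hookrightarrow \Omega^{[1]}_X$, i.e.\ that $f^* \omega$ acquires a pole worse than logarithmic along some component of $E$; (iv) promote to dimension $d$ by the product / iterated cone argument of the first paragraph, verifying the singularity stays isolated and normal. The main obstacle I expect is step (iii) — pinning down the precise order of pole of the pulled-back form along the exceptional curves, which requires either an explicit local model of the resolution (available for quotient singularities and toric cones) or a cohomological argument contrasting the failure of the relevant Cartier operator to be surjective (via Theorem \ref{Introthm:ext thm}, whose contrapositive says that non-extendability forces non-surjectivity of $C^{[1]}_{X,B}$, and conversely one wants a direct witness). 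A clean way to organize step (iii) uniformly in $p$ is to choose the construction so that $B^{[1]}_X$ fails Serre's $(S_2)$ or $(S_3)$ in the manner that Theorem \ref{Introthm:Flenner type ext thm} shows is exactly the obstruction, thereby making the counterexample the sharp boundary case of that positive result.
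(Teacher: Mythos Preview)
Your proposal has two genuine gaps that prevent it from going through.

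First, the reduction step in your opening paragraph fails: if $X_0$ is a surface with an isolated singularity at $o$, then $X_0 \times \mathbb{A}^{d-2}$ has singular locus $\{o\} \times \mathbb{A}^{d-2}$, which is $(d-2)$-dimensional, not isolated. Localizing at a closed point does not fix this, and an ``iterated cone'' has the same problem (coning a singular variety propagates the singularity along the ruling). So you cannot reduce the statement to $d=2$ while keeping the singularity isolated.

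Second, your proposed surface examples in general characteristic do not work. Cyclic quotient singularities $\frac{1}{p^2}(1,q)$ are toric, hence locally $F$-liftable (Remark~\ref{rem:F-liftable}~(3)), hence \emph{satisfy} the logarithmic extension theorem by Theorem~\ref{Introthm:linearly reductive quotient}. The $E_8^2$ example only lives in characteristic two, and you have not actually produced, for each prime $p$, a surface singularity violating the logarithmic extension theorem for one-forms.

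The paper's argument (Example~\ref{eg:counterex to Flenner:non-Cohen-Macaulay case}) sidesteps both issues in one stroke: it uses Mukai's construction of a smooth projective $(d-1)$-fold $V$ over a perfect field of characteristic $p$ carrying an ample divisor $A$ with $H^0(V,\Omega_V^1\otimes\sO_V(-A))\neq 0$, and takes the affine cone $C_a(V,A)$. This is a $d$-dimensional normal variety with a single isolated singularity at the vertex, and \cite[Proposition~11.3]{Gra} converts the non-vanishing of $H^0(V,\Omega_V^1(-A))$ directly into failure of the logarithmic extension theorem for one-forms on the cone. No case analysis in $p$ and no product construction are needed.
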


\subsection{Further discussion}
Finally, we explore an application of Theorem \ref{Introthm:ext thm} to singularities appearing in the minimal model program, where a connection to $F$-singularities exists, suggesting potential affirmative outcomes. Specifically, for two-dimensional klt singularities over a perfect field of characteristic $p > 5$, they are known to be $F$-regular (\cite[Theorem 1.1]{Hara(two-dim)}), and thus, they satisfy the logarithmic extension theorem by Theorem \ref{Introthm:linearly reductive quotient}.

Moreover, in dimension three, we obtain a partial affirmative result as follows: Instead of the $F$-injectivity condition in Theorem \ref{Introthm:Flenner type ext thm} (2), we can replace it with the condition that $X$ is quasi-$F$-split, a notion developed in \cite{KTTWYY1}.
In \cite{KTTWYY2}, it is proved that every three-dimensional $\Q$-factorial affine klt variety over a perfect field of characteristic $p > 41$ is quasi-$F$-split. 
From this, we can deduce that the logarithmic extension theorem for one-forms holds for three-dimensional terminal singularities over a perfect field of characteristic $p > 41$. Further details can be found in \cite{KTTWYY2}.

\section{Preliminaries}

\subsection{Notation and terminology}\label{subsection:Notation and terminologies}
Throughout the paper, we work over a fixed perfect field $k$ of characteristic $p>0$ unless stated otherwise.
A \textit{variety} means an integral separated scheme of finite type.
We say that a pair $(X,B)$ of a normal variety $X$ and a $\Q$-divisor $B$ on $X$ is \textit{log smooth} if $X$ is smooth and $B$ has simple normal crossing support. 
Given a normal variety $X$, a reduced divisor $B$ on $X$, a $\Q$-divisor $D$ on $X$, and an integer $i\geq 0$, we denote $j_{*}(\Omega_U^{i}(\log B)\otimes \sO_U(\lfloor D\rfloor))$ by $\Omega_X^{[i]}(\log B)(D)$, where $j\colon U\hookrightarrow X$ is the inclusion of the log smooth locus $U$ of $(X, B)$.

\subsection{Serre's condition and reflexive sheaves}
In this subsection, we recall basic facts about Serre's condition and reflexive sheaves. 

\begin{defn}
Let $X$ be a Noetherian scheme, $\mathcal{F}$ a coherent $\sO_X$-module, and $n\geq 0$ an integer.
We say that $\mathcal{F}$ satisfies \textit{Serre's condition} $(S_n)$ if \[\mathrm{depth}_{\m_x}(\mathcal{F}_{x})\geq \min\{n, \dim(\Supp(\mathcal{F}_x))\}\] holds for every point $x\in X$.
\end{defn}

\begin{defn}
Let $X$ be a Noetherian integral scheme and $\mathcal{F}$ a coherent $\sO_X$-module.
We say that $\mathcal{F}$ is \textit{reflexive} if the natural map 
\[
\mathcal{F}\to \mathcal{F}^{**}\coloneqq \mathcal{H}\! \mathit{om}_{\sO_X}(\mathcal{H}\! \mathit{om}_{\sO_X}(\mathcal{F}, \sO_X), \sO_X)
\] is an isomorphism.
\end{defn}

When $X$ is a Noetherian normal irreducible scheme, a coherent $\sO_X$-module $\mathcal{F}$ with $\Supp(\mathcal{F})=X$ satisfies $(S_2)$ if and only if it is reflexive. 

\subsection{\texorpdfstring{$F$}--singularities and quotient singularities}
In this subsection, we gather facts about $F$-injectivity, $F$-regularity, $F$-liftability, and quotient singularities.

\begin{defn}\label{def:F-pure}
Let $X$ be a variety and $x\in X$ a point. We say that $X$ is \textit{$F$-injective at $x$} if the map
\[
H^i_{\m_x}(\sO_{X, x})\xrightarrow{F} H^i_{\m_x}(\sO_{X, x})
\] 
induced by the Frobenius morphism is injective for all $i\geq 0$.
We say that $X$ is \textit{$F$-injective} if $X$ is $F$-injective at every point of $X$.
\end{defn}

\begin{rem}
An $F$-injective singularity is closely related to a Du Bois singularity. In particular, Schwede \cite{Schwede} proved that if the modulo $p$-reduction of a reduced scheme of finite type over a field of characteristic zero is $F$-injective for dense $p$, then it is Du Bois.
\end{rem}

\begin{defn}\label{def:str F-reg}
Let $X$ be a variety and $x\in X$ a point.
We say that $X$ is \textit{strongly $F$-regular at $x$} if, for every non-zero element $c\in\sO_{X,x}^{\circ}$, there exists an integer $e>0$ such that the map
\[
\sO_{X,x}\xrightarrow{F^e} F^{e}_{*}\sO_{X,x}\xrightarrow{\times F_{*}^{e}c}F^{e}_{*}\sO_{X,x}
\] splits as an $\sO_{X, x}$-module homomorphism.
We say that $X$ is \textit{strongly $F$-regular} if $X$ is strongly $F$-regular at every point of $X$.
\end{defn}

\begin{defn}\label{def:F-lift}
Let $X$ be a normal variety and $B=\sum_{r=1}^n B_r$ a reduced divisor on $X$, where every $B_r$ is an irreducible component.  
We denote the ring of Witt vectors of length two by $W_2(k)$.
We say that $(X,B)$ is \textit{$F$-liftable} 
if there exist 
\begin{itemize}
	\item a flat morphism $\widetilde{X} \to \Spec W_2(k)$ together with a closed immersion $i\colon X\hookrightarrow \widetilde{X}$, 
	 \item a closed subscheme $\widetilde{B}_r$ of $\widetilde{X}$ flat over $W_2(k)$ for all $r\in\{1,\ldots,n\}$, and 
	 \item a morphism $\widetilde{F}\colon \widetilde{X}\to \widetilde{X}$ over $W_2(k)$
\end{itemize}
	such that 
\begin{itemize}
	\item the induced morphism $i\times_{W_2(k)}k \colon X\to \widetilde{X}\times_{W_2(k)} k$ is an isomorphism, 
	\item $(i\times_{W_2(k)}k)(B_r)= \widetilde{B}_r\times_{W_2(k)} k$ for all $r\in\{1,\ldots,n\}$, and 
    \item $\widetilde{F}\circ i=i \circ F$ and $\widetilde{F}^{*}\widetilde{B}_r=p\widetilde{B}_r$ for all $r\in\{1,\ldots,n\}$.
\end{itemize}

We say that $(X,B)$ is \textit{locally $F$-liftable} if there exists an open covering $\{U_i\}_{i\in I}$ of $X$ such that $(U_i,B|_{U_i})$ is $F$-liftable for every $i\in I$.
\end{defn}

\begin{rem}\label{rem:F-liftable}\,
\begin{enumerate}
    \item[\textup{(1)}] By \cite[Lemma 8.14]{EV92}, if $(X,B)$ is log smooth, then $(\widetilde{X}, \sum_{r=1}^{n}\widetilde{B}_r)$ is log smooth over $W_2(k)$.
    \item[\textup{(2)}] The $F$-liftability can be defined for schemes that are not necessarily of finite type in a similar way (see \cite[Section 3]{AWZ}). 
    \item[\textup{(3)}] Let $X$ be a normal toric variety and $B$ a reduced torus invariant divisor on $X$. Then $(X,B)$ is $F$-liftable.
When $B=0$, this is \cite[Example 3.1.2]{AWZ}. 
Even when $B\neq 0$, the $F$-liftability of $(X,B)$ can be verified by the fact that every torus invariant prime divisor corresponds to a one-dimensional cone.
\end{enumerate}
\end{rem}

\begin{defn}\label{def;good quotient}
Let $X'$ be a variety and $G$ a group scheme acting on $X'$.
We say that a morphism $g\colon X'\to X$ to a variety $X$ is a \textit{good quotient of $X'$ by the action of $G$} if 
\begin{enumerate}
\item[\textup{(1)}] $g$ is an affine morphism,
\item[\textup{(2)}] $g$ is $G$-invariant, and
\item[\textup{(3)}] the natural map $\sO_X\to (g_{*}\sO_{X'})^{G}$ is an isomorphism.
\end{enumerate}
\end{defn}

\begin{defn}
Let $G$ be an affine group scheme of finite type.
We say that $G$ is a \textit{linearly reductive group scheme} if every linear representation of $G$ is semi-simple.
\end{defn}

\begin{thm}\label{thm:AWZ}
Let $g\colon X'\to X$ be a good quotient of a smooth variety by an action of a reduced linearly reductive group scheme.
Then $X$ is normal and locally $F$-liftable.
\end{thm}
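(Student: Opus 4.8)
The statement to prove is Theorem \ref{thm:AWZ}: a good quotient $g\colon X'\to X$ of a smooth variety by a linearly reductive group is normal and locally $F$-liftable.

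\medskip

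The plan is to reduce the statement to the cited result of Achinger--Witaszek--Zdanowicz \cite{AWZ2}, taking care of the two separate assertions—normality and local $F$-liftability—and verifying that the hypotheses there are met by a good quotient in the sense of Definition \ref{def;good quotient}.

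First I would establish normality. Since the question is local on $X$ and $g$ is affine, I may assume $X=\Spec R$ and $X'=\Spec S$ with $R=S^{G}$, where $S$ is a normal (indeed regular) domain because $X'$ is smooth. A ring of $G$-invariants of a normal domain is normal: it is a subring of a normal domain, hence reduced and integrally closed in its fraction field intersected with $S$, and one checks that $\mathrm{Frac}(R)\cap S=R$ using that $G$ acts on $\mathrm{Frac}(S)$ and $R$ is the fixed subring; linear reductivity guarantees the Reynolds operator $S\to R$, which splits the inclusion as an $R$-module map and immediately gives that $R$ is integrally closed in $\mathrm{Frac}(R)$. Finite type of $R$ over $k$ follows from linear reductivity as well (Nagata), so $X$ is a normal variety.

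Next I would address local $F$-liftability, which is the substantive point. The Reynolds operator makes $g$ a \emph{cohomologically affine} morphism and exhibits $\sO_X$ as a direct summand of $g_{*}\sO_{X'}$ as $\sO_X$-modules; this is precisely the setting in which \cite{AWZ2} proves that a good (linearly reductive) quotient of a smooth—or more generally $F$-liftable—scheme is itself $F$-liftable, by descending a Frobenius lift along the quotient map. Concretely, one lifts $X'$ and its Frobenius to $W_2(k)$ (possible since $X'$ is smooth, by the standard deformation-theoretic fact that smooth schemes lift together with Frobenius, e.g.\ \cite[\S 8]{EV92} or \cite[Example 3.1]{AWZ}), lifts the $G$-action compatibly using linear reductivity (the obstruction to $G$-equivariant lifting lies in a group cohomology group that vanishes because $G$ is linearly reductive), forms the quotient over $W_2(k)$, and checks that the resulting flat $W_2(k)$-scheme is a lift of $X$ carrying a lift of its Frobenius. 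Working on a suitable affine open cover of $X$, pulled back to a $G$-stable affine cover of $X'$, yields the \emph{local} $F$-liftability in the sense of Definition \ref{def:F-lift} with $B=0$. All of this is carried out in \cite{AWZ2}, so the proof amounts to quoting that theorem once the good-quotient hypotheses have been matched to their framework.

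The main obstacle—really the only non-formal ingredient—is the descent of the Frobenius lift through the quotient: one must know that linear reductivity of $G$ forces the relevant lifting and descent obstructions (which live in $H^{1}$ and $H^{2}$ of $G$ with coefficients in coherent modules, or in the exactness of the $G$-invariants functor) to vanish, so that a $G$-equivariant Frobenius lift on $\tilde{X'}$ exists and descends to $\tilde X$. Since this is exactly the content of \cite[Theorem~A / \S 5]{AWZ2}, I would simply invoke it; the remainder of the argument is the bookkeeping above, reducing our Definition \ref{def;good quotient} of good quotient to the hypotheses under which their theorem applies and passing to an affine cover to obtain the local form of the conclusion.
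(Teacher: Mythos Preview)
Your proposal is correct and follows essentially the same route as the paper: reduce to $X$ affine (so $X'$ is affine too), use the Reynolds splitting of $\sO_X\hookrightarrow g_{*}\sO_{X'}$ to deduce normality of $X$ from that of $X'$, and then invoke \cite{AWZ2} (the paper cites \cite[Example~3.1.1]{AWZ} for $F$-liftability of smooth affine varieties and \cite[Theorem~2.10~(c)]{AWZ2} for the descent) to obtain local $F$-liftability of $X$. One small point of presentation: make the affine reduction explicit \emph{before} asserting that $X'$ lifts with its Frobenius to $W_2(k)$, since smooth non-affine varieties need not do so globally---your eventual passage to an affine cover handles this, but the order you wrote it in momentarily overstates the lifting input.
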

\begin{proof}
We may assume that $X$ is affine. Then $X'$ is also affine since $g$ is an affine morphism.
By the linearly reductive property, the induced map $\sO_X\to g_{*}\sO_{X'}$ splits, and we can deduce the normality of $X$ from that of $X'$.
Since every smooth affine variety is $F$-liftable by \cite[Example 3.1.1]{AWZ}, the $F$-liftability of $X$ follows from \cite[Theorem 2.10 (c)]{AWZ2}.   
\end{proof}

\begin{defn}\label{def:quotient singularity}
Let $k$ be an algebraically closed field of positive characteristic.
Let $X$ be a variety over $k$ and $G$ a finite group scheme over $k$. 
We say that a closed point $x\in X$ is a \textit{quotient singularity by $G$} 
if there exists a faithful action of $G$ on $\Spec k\llbracket x_1,\ldots,x_d\rrbracket$ fixing the closed point such that $\sO_{X,x}^{\wedge}\cong k\llbracket x_1,\ldots,x_d\rrbracket^{G}$.
If the $G$-action is free outside the closed point in addition, then we say that $x\in X$ is an \textit{isolated quotient singularity by $G$}.

We say that a variety over a perfect field $k$ has \textit{only (isolated) quotient singularities} if the base change $X_{\overline{k}}$ has only (isolated) quotient singularity, where $\overline{k}$ denotes the algebraic closure of $k$.
\end{defn}

\begin{thm}\label{thm:F-liftable singularities}
Let $X$ be a variety.
Suppose that one of the following conditions holds:
\begin{enumerate}
    \item[\textup{(1)}] $X$ has only quotient singularities by finite linearly reductive group schemes. 
    \item[\textup{(2)}] $X$ is $F$-regular and $\dim X=2$.
    \item[\textup{(3)}] $X$ is $F$-regular and has only isolated quotient singularities.
\end{enumerate}
Then $X$ is normal and locally $F$-liftable.
\end{thm}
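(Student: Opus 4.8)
The plan is to reduce each of the three cases to the statement that a quotient singularity by a finite linearly reductive group scheme is $F$-liftable, and then to invoke the (formal, or local-finite-type) analogue of Theorem \ref{thm:AWZ}. Throughout, normality is either hypothesized (in (2), (3) $F$-regular varieties are normal by definition of strong $F$-regularity) or follows from the quotient description, so the main content is local $F$-liftability. Since $F$-liftability is a local (indeed étale-local, hence formal-local) property and $k$ is perfect, after passing to $\bar{k}$ and completing we may work with $K\llbracket x_1,\dots,x_n\rrbracket^{G}$ for a finite linearly reductive group scheme $G$ over an algebraically closed field $K$; it suffices to produce a flat $W_2(K)$-lift of this complete local ring together with a compatible Frobenius lift.

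For case (1) this is the heart of the matter. First I would lift the $G$-action: a finite linearly reductive group scheme $G$ over $K$ lifts canonically to a finite linearly reductive group scheme $\widetilde{G}$ over $W_2(K)$ (linearly reductive finite group schemes are rigid and deform uniquely, by the theory in \cite{AWZ2} or the deformation theory of \cite{LMM}), and the $G$-action on $K\llbracket x_1,\dots,x_n\rrbracket$ lifts to a $\widetilde{G}$-action on $W_2(K)\llbracket x_1,\dots,x_n\rrbracket$ — concretely, by semisimplicity one can choose linear coordinates so the action is given by a representation, and representations of linearly reductive group schemes deform. The affine space $\mathbb{A}^n$ carries its standard Frobenius lift $x_i\mapsto x_i^p$ on $W_2(K)\llbracket x_1,\dots,x_n\rrbracket$; because the lifted action is by a representation, this Frobenius lift is $\widetilde{G}$-equivariant (it commutes with linear, hence with all, automorphisms up to the $p$-th power twist required by the definition). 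Then $\bigl(W_2(K)\llbracket x_1,\dots,x_n\rrbracket\bigr)^{\widetilde{G}}$ is a flat $W_2(K)$-lift of $K\llbracket x_1,\dots,x_n\rrbracket^{G}$ — flatness and the base-change identity $(\,-\,)^{\widetilde{G}}\otimes_{W_2(K)}K=(\,-\otimes K)^{G}$ use exactness of invariants for linearly reductive $G$ — and the equivariant Frobenius lift descends to invariants, giving the required $\tilde{F}$. This establishes local $F$-liftability in case (1).

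Cases (2) and (3) are then formal consequences via the structural results quoted in the paper: by \cite[Theorem 1.6]{LMM} (cited in Remark \ref{rem:linearly reductive quotient}), a two-dimensional $F$-regular singularity is a quotient singularity by a finite linearly reductive group scheme, so (2) follows from (1); and an $F$-regular isolated quotient singularity is, by the same circle of ideas in \cite{LMM}, a quotient by a \emph{finite linearly reductive} group scheme (the point being that $F$-regularity forces the acting group scheme in a quotient presentation to be linearly reductive — e.g.\ it cannot have $p$-torsion causing wild ramification), so (3) also reduces to (1). In both cases one must check the reduction is compatible with the given (non-necessarily closed) base field $k$: $F$-liftability of $X$ over $k$ can be tested after the faithfully flat base change to $\bar{k}$ and Zariski-locally, so the formal-local statement over $K=\bar{k}$ suffices.

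The main obstacle I anticipate is the equivariance of the Frobenius lift in case (1): one has to be careful that the canonical Frobenius lift on $W_2(K)\llbracket x_1,\dots,x_n\rrbracket$ really is compatible with the \emph{lifted} group action (not merely the special fiber action), and this is where linear reductivity is used in an essential way — it guarantees both that the action can be linearized after a coordinate change lifting to $W_2(K)$ and that the Reynolds operator (exactness of invariants) behaves well under the lift, so that taking invariants commutes with reduction mod $p$. A secondary technical point is justifying that $F$-liftability, defined in the paper for finite-type varieties, may be verified on completions; this follows from its definition being étale-local together with Artin approximation, or more simply by observing that the paper's intended application only needs the local covering $\{U_i\}$, and the lift of $\widehat{\sO}_{X,x}$ algebraizes over a suitable affine étale neighborhood by standard approximation arguments (cf.\ \cite[Section 3]{AWZ}).
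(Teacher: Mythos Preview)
Your reduction of (2) and (3) to (1) via \cite{LMM} matches the paper, but your treatment of (1) has a real gap at exactly the point you flagged. The assertion that the standard Frobenius lift $x_i\mapsto x_i^p$ on $W_2(K)\llbracket x_1,\dots,x_n\rrbracket$ is $\widetilde G$-equivariant, or even that it preserves invariants, is false in general: over $W_2(K)$ the map $x_i\mapsto x_i^p$ commutes only with \emph{monomial} linear automorphisms (diagonal matrices and coordinate permutations), not with arbitrary ones, because $(\sum_j a_{ij}x_j)^p\neq\sum_j a_{ij}x_j^p$ away from characteristic $p$. For a concrete failure take $p=3$, $G=\Z/2$ acting on $W_2(K)[x,y]$ by $x\mapsto x+y$, $y\mapsto -y$; then $2x+y$ is invariant but $\tilde F(2x+y)=2x^3+y^3$ is not, since $g\cdot(2x^3+y^3)=2x^3+y^3+6(x^2y+xy^2)$ and $6\neq 0$ in $W_2(K)$. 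One can diagonalize this particular abelian action first, but for a non-abelian \'etale quotient $G/G^\circ$ whose representation is not monomial (for instance the $3$-dimensional icosahedral representation of $A_5$, which cannot sit inside $(\mathbb{G}_m)^3\rtimes S_3$ since $A_5$ is simple) no coordinate change makes your argument go through.

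The paper avoids this by a genuinely different route. After linearizing and using Artin approximation to pass from the completion to a finite-type model $\mathbb{A}^d_k/G$ (so that the paper's finite-type definition of $F$-liftability and Theorem~\ref{thm:AWZ} apply), it exploits the structure of finite linearly reductive group schemes: the identity component $G^\circ$ is diagonalizable, so $\mathbb{A}^d_k/G^\circ$ is a normal toric variety and hence $F$-liftable by the explicit toric lift (Remark~\ref{rem:F-liftable}(3)); then $G/G^\circ$ is \'etale of order prime to $p$, i.e.\ a linearly reductive \emph{group} in the sense of Theorem~\ref{thm:AWZ}, and that theorem (\cite[Theorem 2.10(c)]{AWZ2}) is applied to the further quotient $(\mathbb{A}^d_k/G^\circ)/(G/G^\circ)=\mathbb{A}^d_k/G$. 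Thus the descent of the Frobenius lift through the non-diagonalizable part is handled by invoking \cite{AWZ2} as a black box rather than by an explicit construction; your proposal would need a substitute for this step.
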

\begin{proof}
We may assume that $X$ is affine.
Suppose that (1) holds. 
By \cite[Lemma 3.3.5]{AWZ} and \cite[Lemma A.11]{Zda18}, if $g\colon X'\to X$ is an \'etale surjective morphism from a normal variety $X'$, then $X$ is $F$-liftable if and only if so is $X'$.
Thus, by replacing $k$ with its algebraic closure, we may assume that $k$ is an algebraically closed field. 

Let $x\in X$ be a closed point.
Then there exists a faithful action of a finite linearly reductive group scheme $G$ on $\Spec k\llbracket x_1,\ldots,x_d\rrbracket$ fixing the closed point such that $\sO_{X,x}^{\wedge}\cong k\llbracket x_1,\ldots,x_d\rrbracket^{G}$.
Since $G$ is linearly reductive, the $G$-action is linearizable, and we have a linear $G$-action on $\mathbb{A}_{k}^{d}=\Spec k[x_1,\ldots, x_d]$ such that $\sO_{X,x}^{\wedge}$ is isomorphic to the completion of $k[x_1,\ldots, x_d]^G$ at the origin $x'$ (see \cite[Proof of Corollary 1.8]{Satriano} or \cite[Subsection 6.2]{LMM} for example).
By the Artin approximation (\cite[Corollary 2.6]{Artin(approximation)}), there exist \'etale morphisms $\pi\colon Z\to X$ and $\pi'\colon Z\to \mathbb{A}^d_k/G$ and a closed point $z\in Z$ such that $\pi(z)=x$ and $\pi'(z)=x'$.
Therefore, it suffices to show that $\mathbb{A}^d_k/G$ is $F$-liftable. 

Let $G^{\circ}$ be the connected component containing the identity.
Then $G^{\circ}$ is a normal subgroup scheme, and thus linearly reductive by \cite[Lemma 2.2]{Hashimoto}.
By \cite[Theorem 2.8 and Lemma 2.5 (1)$\Rightarrow$(3)]{Hashimoto}, the action of $G^{\circ}$ is diagonalizable, and thus the natural torus action on $\mathbb{A}_{k}^{d}$ descends to $\mathbb{A}_{k}^{d}/G^{\circ}$. Thus, $\mathbb{A}_{k}^{d}/G^{\circ}$ is normal and toric, and in particular, $F$-liftable by Remark \ref{rem:F-liftable} (3).
Since $G/G^{\circ}$ is \'etale, 
we conclude from Theorem \ref{thm:AWZ} that $\mathbb{A}_{k}^{d}/G=(\mathbb{A}_{k}^{d}/G^{\circ})/(G/G^{\circ})$ is $F$-liftable.

Finally, if (2) or (3) holds, then the assertion follows from (1) and \cite[Theorem 1.6 and Proposition 7.2 (3)$\Rightarrow$(1)]{LMM}.
\end{proof}

\begin{rem}
There is a notion of \textit{weak $F$-regularity} that is conjectured to coincide with strong $F$-regularity. In fact, for normal varieties that are $\Q$-Gorenstein or have dimension at most three, it is known that these notions coincide (see \cite[Chapter 3]{Maccrimmon} and \cite{Williams}).

By \cite[Proposition 7.1 (2)]{LMM}, isolated quotient singularities by finite linearly reductive group schemes are $\Q$-factorial. Therefore, we simply refer to $X$ as \textit{$F$-regular} in Theorem \ref{thm:F-liftable singularities} (2) and (3).
\end{rem}

\section{Reflexive Cartier operator}\label{sec:reflexive Cartier operators}
In Subsection \ref{Sec:Def of reflexive Cartier operators}, we define the reflexive Cartier operator.
The reflexive Cartier operator is not always surjective. In Subsection \ref{Sec:Suj of reflexive Cartier operators}, we investigate when it is surjective.

\subsection{Definition of the reflexive Cartier operator}\label{Sec:Def of reflexive Cartier operators}
Throughout this subsection, we use the following convention.

\begin{conv}
Let $X$ be a normal variety, $B$ a reduced divisor on $X$, and $D$ a $\Q$-divisor on $X$ such that the support $\Supp(\{D\})$ of the factional part is contained in $B$.
Set $D'\coloneqq \lfloor pD\rfloor-p\lfloor D\rfloor$.
Then $D'$ is a $\Z$-divisor such that $\Supp(D')\subset B$ and every coefficient of $D'$ is less than $p$.
Let $U$ be the log smooth locus of $(X,B)$ and $j\colon U\hookrightarrow X$ the inclusion.
\end{conv}

We recall the Cartier operator generalized by Hara \cite[Section 3]{Hara98}.
Let $j'\colon U\setminus B\hookrightarrow U$ be the inclusion.
Viewing $\Omega^{i}_U(\log B)(D')$ as a subsheaf of $(j')_{*}\Omega^{i}_{U\setminus B}$, we obtain the complex
\[
\Omega_U^{\mydot}(\log B)(D')\colon \sO_U(D') \xrightarrow{d} \Omega^1_U(\log B)(D') \xrightarrow{d} \cdots.
\]
Taking the Frobenius pushforward and tensoring with $\sO_U(D)=\sO_U(\lfloor D \rfloor)$, we obtain a complex
\[
F_{*}\Omega_U^{\mydot}(\log B)(pD) \colon F_{*}\sO_U(pD) \xrightarrow{F_{*}d\otimes\sO_U(D)}
F_{*}\Omega^1_U(\log B)(pD) \xrightarrow{F_{*}d\otimes\sO_U(D)} \cdots
\]
of $\sO_U$-modules, where 
\[F_{*}\Omega^{i}_U(\log B)(pD)\coloneqq F_{*}(\Omega^{i}_U(\log B)\otimes \sO_U(\lfloor pD\rfloor ))
\]
for all $i\geq 0$. We define coherent $\sO_U$-modules by 
\begin{align*}
    &B^{i}_U((\log B)(pD))\coloneqq \mathrm{Im}(F_{*}d\otimes\sO_U(D) \colon F_{*}\Omega^{i-1}_U(\log B)(pD) \to F_{*}\Omega^{i}_U(\log B)(pD)),\\
    &Z^{i}_U((\log B)(pD))\coloneqq \mathrm{Ker}(F_{*}d\otimes\sO_U(D) \colon F_{*}\Omega^{i}_U(\log B)(pD) \to F_{*}\Omega^{i+1}_U(\log B)(pD)),
\end{align*}
for all $i\geq 0$.

\begin{lem}\label{lem:Cartier isomorphism (log smooth case)}
There exist the following exact sequences:
\begin{equation}
    0 \to Z^{i}_U((\log B)(pD)) \to F_{*}\Omega^{i}_U(\log B)(pD) \to B^{i+1}_U((\log B)(pD))\to 0,\label{log smooth 1}
\end{equation}
\begin{equation}
    0 \to B^{i}_U((\log B)(pD))\to Z^{i}_U((\log B)(pD))\xrightarrow{C^{i}_{U,B}(D)} \Omega^{i}_U(\log B)(D)\to 0,\label{log smooth 2}
\end{equation}
for all $i\geq0$.
The map $C^{i}_{U,B}(D)$ coincides with $C^{i}_{U,B}\otimes \sO_U(D)$ on $U\setminus \Supp(D')$, where $C^{i}_{U,B}$ is the usual Cartier operator \cite[Theorem 7.2]{Kat70}.

Moreover, $B^{i}_U((\log B)(pD))$ and $Z^{i}_U((\log B)(pD))$ are locally free $\sO_U$-modules. 
\end{lem}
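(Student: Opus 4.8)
The plan is to reduce everything to the classical logarithmic Cartier isomorphism of Katz on a log smooth pair, and then twist by the line bundle $\sO_U(\lfloor D\rfloor)$. Since $U$ is the log smooth locus of $(X,B)$, the pair $(U, B|_U)$ is log smooth, so by \cite[Theorem 7.2]{Kat70} we have for each $i$ the Cartier isomorphism $C^i_{U,B}\colon \mathcal{H}^i(F_*\Omega^{\mydot}_U(\log B))\xrightarrow{\ \sim\ }\Omega^i_U(\log B)$. In particular the complex $F_*\Omega^{\mydot}_U(\log B)$ has locally free cohomology sheaves, and the sheaves $B^i_U(\log B)$ and $Z^i_U(\log B)$ (defined as image and kernel of $F_*d$) are locally free: indeed $Z^i_U(\log B)$ is an extension of the locally free $\mathcal{H}^i\cong\Omega^i_U(\log B)$ by $B^i_U(\log B)$, and $B^{i+1}_U(\log B)$ is the quotient of the locally free $F_*\Omega^i_U(\log B)$ by $Z^i_U(\log B)$; a downward/upward induction on $i$ (starting from $B^0=0$ and from the fact that in top degree $B^{d+1}=0$, or simply noting that all these are coherent and the successive quotients are locally free, hence all are locally free since $F_*\Omega^i$ is) gives the claim. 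This establishes both exact sequences \eqref{log smooth 1} and \eqref{log smooth 2} in the case $D=0$.

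Next I would handle the twist. The key observation is that $F_*d\otimes\sO_U(D)$ is an honest map of $\sO_U$-modules: by the projection formula $F_*\Omega^i_U(\log B)(pD)=F_*(\Omega^i_U(\log B)\otimes\sO_U(\lfloor pD\rfloor))$, and since $\lfloor pD\rfloor=p\lfloor D\rfloor+D'$ with $\Supp(D')\subset B$, we can write this as $\bigl(F_*(\Omega^i_U(\log B)\otimes\sO_U(D'))\bigr)\otimes\sO_U(\lfloor D\rfloor)$, using $F^*\sO_U(\lfloor D\rfloor)=\sO_U(p\lfloor D\rfloor)$ and the projection formula again. So the twisted complex is obtained from the complex $F_*\Omega^{\mydot}_U(\log B)(D')$ — which is $F_*$ of the complex $\Omega^{\mydot}_U(\log B)(D')$ appearing in the statement — by tensoring with the line bundle $\sO_U(\lfloor D\rfloor)$. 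Hence its cohomology sheaves, kernels and images are obtained by the same line-bundle twist from those of $F_*\Omega^{\mydot}_U(\log B)(D')$. It therefore suffices to prove \eqref{log smooth 2} (and \eqref{log smooth 1}) for the complex $\Omega^{\mydot}_U(\log B)(D')$ with $D'$ an integral divisor supported on $B$ whose coefficients lie in $[0,p-1]$. This is precisely the generalized Cartier isomorphism of Hara \cite{Hara98}: on a log smooth pair the map $C^i_{U,B}(D')$ exists and is an isomorphism onto cohomology because locally one can trivialize $\sO_U(D')$ and check the statement monomial-by-monomial in logarithmic coordinates, where the condition $0\le\mathrm{coeff}\le p-1$ is exactly what makes the residue computation work; away from $\Supp(D')$ the complex and the map reduce to the untwisted ones, which gives the compatibility assertion that $C^i_{U,B}(D)$ agrees with $C^i_{U,B}\otimes\sO_U(D)$ there.

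Concretely the remaining steps are: (i) cite Hara's generalized Cartier isomorphism to get that $\Omega^{\mydot}_U(\log B)(D')$, after $F_*$, has locally free cohomology canonically identified via $C^i$ with $\Omega^i_U(\log B)(D')$; (ii) deduce local freeness of $B^i_U(\log B)(D')$ and $Z^i_U(\log B)(D')$ by the same inductive extension argument as above; (iii) twist by $\sO_U(\lfloor D\rfloor)$ to obtain the displayed sequences \eqref{log smooth 1} and \eqref{log smooth 2} and the local freeness of $B^i_U((\log B)(pD))$ and $Z^i_U((\log B)(pD))$; (iv) record the compatibility of $C^i_{U,B}(D)$ with $C^i_{U,B}\otimes\sO_U(D)$ on $U\setminus\Supp(D')$, which is immediate since the two complexes coincide there. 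I expect the only real content to be step (i) — i.e., making precise that the classical logarithmic Cartier isomorphism persists after the integral twist $D'$ with small coefficients — and this is exactly what \cite{Hara98} provides, so in the write-up it will be a citation plus the bookkeeping of the projection-formula identifications. The potential pitfall to watch is the interplay of round-downs, namely that $\lfloor pD\rfloor-p\lfloor D\rfloor=D'$ has coefficients in $\{0,\dots,p-1\}$, which is what keeps us inside the range where Hara's isomorphism applies; this is guaranteed by the Convention preceding the lemma.
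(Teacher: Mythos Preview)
Your approach is the paper's: invoke Hara's generalized Cartier isomorphism for the integral twist $D'$, then tensor by the line bundle $\sO_U(\lfloor D\rfloor)$ via the projection formula. One slip needs fixing. In step~(i) you say Hara identifies the cohomology of $F_*\Omega^{\mydot}_U(\log B)(D')$ with $\Omega^i_U(\log B)(D')$; the correct target is $\Omega^i_U(\log B)$ with \emph{no} twist --- the divisor $D'$ sits inside $F_*$ and, having coefficients in $\{0,\dots,p-1\}$, contributes $\lfloor D'/p\rfloor=0$ on the outside. With this correction, tensoring by $\sO_U(\lfloor D\rfloor)$ lands in $\Omega^i_U(\log B)(D)$ as stated; as you wrote it, step~(iii) would produce $\Omega^i_U(\log B)(D'+\lfloor D\rfloor)$, which is the wrong sheaf whenever $D'\neq 0$.

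Your local-freeness sketch is also a bit loose. The ascending direction from $B^0=0$ does not close on its own, since a quotient of a locally free sheaf by a locally free subsheaf need not be locally free. The paper runs the induction downward from $Z^{\dim X}_U((\log B)(pD))=F_*\omega_U(B+pD)$, repeatedly using that when the last term of a short exact sequence is locally free the sequence splits locally, so local freeness of the middle term forces that of the first.
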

\begin{proof}
We obtain \eqref{log smooth 1} by the definitions of $B^{i}_U((\log B)(pD))$ and $Z^{i}_U((\log B)(pD))$.
Although \eqref{log smooth 2} have been constructed in \cite[Subsection 3.4]{Hara98}, we include the sketch of the proof for the completeness.
By \cite[Lemma 3.3]{Hara98}, we have an exact sequence
\[
0 \to B^{i}_U((\log B)(D'))\to Z^{i}_U((\log B)(D')) \xrightarrow{(C^{i}_{U,B})'} \Omega^{i}_U(\log B) \to 0,
\]
such that the map $(C^{i}_{U,B})'$ coincides with the usual Cartier operator $C^i_{U,B}$ on $U\setminus \Supp(D')$ for all $i\geq 0$.
By tensoring with $\sO_U(D)$, we obtain the exact sequence
\[
0 \to B^{i}_U((\log B)(pD))\to Z^{i}_U((\log B)(pD)) \xrightarrow{C^{i}_{U,B}(D)} \Omega^{i}_U(\log B)(D) \to 0,
\]
where $C^{i}_{U,B}(D)\coloneqq (C_{U,B}^{i})'\otimes \sO_U(D)$.
Then $C^{i}_{U,B}(D)$ coincides with $C^{i}_{U,B}\otimes \sO_U(D)$ on $U\setminus \Supp(D')$.

Next, we prove that $B^{i}_U((\log B)(pD))$ and $Z^{i}_U((\log B)(pD))$ are locally free.
We note that if the last term of a short exact sequence is locally free, then the sequence splits locally, and if we assume that the middle term is locally free in addition, then so is the first term.
We also note that the Frobenius pushforward of a locally free $\sO_U$-module is locally free by Kunz's theorem.
Therefore, if $Z^{i}_U((\log B)(pD))$ is locally free, then so is $B^{i}_U((\log B)(pD))$ by \eqref{log smooth 2}, and
if $B^{i+1}_U((\log B)(pD))$ is locally free, then so is $Z^{i}_U((\log B)(pD))$ by \eqref{log smooth 1}.
Now, since $Z^{\dim d}_U((\log B)(pD))=F_{*}\omega_U(B+pD)$ is locally free,
we conclude.
\end{proof}

\begin{rem}\label{rem:tensoring Z-divs}
If $D$ is a $\Z$-divisor, then $\Supp(D')=\emptyset$ and 
\begin{align*}
    &B^{i}_U((\log B)(pD))= B^{i}_U(\log B)\otimes \sO_U(D)\\
    &Z^{i}_U((\log B)(pD))= Z^{i}_U(\log B)\otimes \sO_U(D),\,\,\text{and}\\
    &C^{i}_{U,B}(D)=C^{i}_{U,B}\otimes \sO_U(D)
\end{align*}
hold for all $i\geq 0$ by Lemma \ref{lem:Cartier isomorphism (log smooth case)}.
\end{rem}

\begin{defn}\label{def:reflexive Carter operators}
We define reflexive $\sO_X$-modules by 
\begin{align*}
    &B^{[i]}_X((\log B)(pD))\coloneqq j_{*}B^{i}_U((\log B)(pD))\,\,\text{and}\\
    &Z^{[i]}_X((\log B)(pD))\coloneqq j_{*}Z^{i}_U((\log B)(pD))
\end{align*}
for all $i\geq 0$.
The \textit{$i$-th reflexive Cartier operator}
\[
C^{[i]}_{X,B}(D)\colon Z^{[i]}_X((\log B)(pD))\to \Omega^{[i]}_X(\log B)(D)
\]
\textit{associated to $D$} is defined as $j_{*}C^{i}_{U,B}(D)$ for all $i\geq 0$.
We simply call $C^{[i]}_{X,B}\coloneqq C^{[i]}_{X,B}(0)$ the \textit{$i$-th reflexive Cartier operator}.
\end{defn}

\begin{lem}\label{lem:Cartier operators}
There exist the following exact sequences:
\begin{equation}
0 \to Z^{[i]}_X((\log B)(pD)) \to F_{*}\Omega^{[i]}_X(\log B)(pD) \to B^{[i+1]}_X((\log B)(pD)),\label{non-log smooth 1}
\end{equation}
\begin{equation}
0 \to B^{[i]}_X((\log B)(pD))\to Z^{[i]}_X((\log B)(pD))\xrightarrow{C^{[i]}_{X,B}(D)} \Omega^{[i]}_X(\log B)(D),\label{non-log smooth 2}
\end{equation}
for all $i\geq 0$.
Moreover, \eqref{non-log smooth 1} and \eqref{non-log smooth 2} are exact on the right on $U$, and the map $C^{[i]}_{X,B}(D)|_{U}$ coincides with $C^{i}_{U,B}\otimes\sO_U(D)$ on $U\setminus \Supp(D')$.
\end{lem}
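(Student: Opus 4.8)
The plan is to deduce $(\ref{non-log smooth 1})$ and $(\ref{non-log smooth 2})$ from the log smooth sequences $(\ref{log smooth 1})$ and $(\ref{log smooth 2})$ of Lemma \ref{lem:Cartier isomorphism (log smooth case)} by pushing them forward along the open immersion $j\colon U\hookrightarrow X$. First I would use that $j_{*}$ is left exact, so applying it to the short exact sequence $(\ref{log smooth 1})$ produces an exact sequence
\[
0 \to j_{*}Z^{i}_U((\log\,B)(pD)) \to j_{*}\bigl(F_{*}\Omega^{i}_U(\log\,B)(pD)\bigr) \to j_{*}B^{i+1}_U((\log\,B)(pD)),
\]
and likewise for $(\ref{log smooth 2})$. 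By Definition \ref{def:reflexive Carter operators} the outer terms here are $Z^{[i]}_X((\log\,B)(pD))$ and $B^{[i+1]}_X((\log\,B)(pD))$ (and, for $(\ref{log smooth 2})$, $B^{[i]}_X((\log\,B)(pD))$ and $Z^{[i]}_X((\log\,B)(pD))$); the map induced on the middle terms of the pushforward of $(\ref{log smooth 2})$ is $j_{*}C^{i}_{U,B}(D)=C^{[i]}_{X,B}(D)$ by definition, and by the convention of Subsection \ref{subsection:Notation and terminologies} one has $j_{*}\Omega^{i}_U(\log\,B)(D)=\Omega^{[i]}_X(\log\,B)(D)$.

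The only identification requiring comment is $j_{*}\bigl(F_{*}\Omega^{i}_U(\log\,B)(pD)\bigr)=F_{*}\Omega^{[i]}_X(\log\,B)(pD)$. This follows from the commutation $F_{*}\circ j_{*}=j_{*}\circ F_{*}$, which holds because the absolute Frobenius is the identity on underlying topological spaces, so that $F^{-1}(U)=U$ and the square formed by $F\colon X\to X$, the Frobenius of $U$, and $j$ commutes; applying this to the $\sO_U$-module $\Omega^{i}_U(\log\,B)\otimes\sO_U(\lfloor pD\rfloor)$ and unwinding $\Omega^{[i]}_X(\log\,B)(pD)=j_{*}(\Omega^{i}_U(\log\,B)\otimes\sO_U(\lfloor pD\rfloor))$ gives the claim. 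This establishes $(\ref{non-log smooth 1})$ and $(\ref{non-log smooth 2})$.

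For the remaining assertions I would restrict to $U$. Since $j$ is an open immersion, $j^{*}j_{*}\F\cong\F$ for every coherent $\sO_U$-module $\F$, so the restrictions of $(\ref{non-log smooth 1})$ and $(\ref{non-log smooth 2})$ to $U$ are canonically $(\ref{log smooth 1})$ and $(\ref{log smooth 2})$, which are short exact; in particular both sequences are right exact on $U$, and $C^{[i]}_{X,B}(D)|_{U}$ is $C^{i}_{U,B}(D)$, which by Lemma \ref{lem:Cartier isomorphism (log smooth case)} agrees with $C^{i}_{U,B}\otimes\sO_U(D)$ on $U\setminus\Supp(D')$.

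I do not expect a genuine obstacle here: the entire content sits in Lemma \ref{lem:Cartier isomorphism (log smooth case)}, and the present statement is its image under the left exact functor $j_{*}$. The one point to be careful about is that $j_{*}$ does not preserve surjectivity in general (the non-log smooth locus of $(X,B)$ may be nonempty), which is precisely why $(\ref{non-log smooth 1})$ and $(\ref{non-log smooth 2})$ are stated as merely left exact on $X$, with global right exactness claimed only over $U$.
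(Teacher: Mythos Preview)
Your proposal is correct and follows exactly the paper's own approach: the paper's proof is the single sentence ``Taking the pushforward of $(\ref{log smooth 1})$ and $(\ref{log smooth 2})$ by $j$, we obtain the assertions.'' You have simply unpacked this, including the commutation $j_{*}\circ F_{*}=F_{*}\circ j_{*}$ and the restriction argument via $j^{*}j_{*}\cong\mathrm{id}$, none of which the paper spells out.
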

\begin{proof}
Taking the pushforward of \eqref{log smooth 1} and \eqref{log smooth 2} by $j$, we obtain the assertions.
\end{proof}

\begin{rem}\label{rem:ignoring boundary}
If $D$ is a $\Z$-divisor, then the reflexive $\sO_X$-module
\[
B^{[1]}_X((\log B)(pD))=j_{*}(\mathrm{Im}(F_{*}d\otimes\sO_U(D) \colon F_{*}\sO_U(pD) \to F_{*}\Omega^{1}_U(\log B)(pD)))
\] depends only on $D$, not $B$.
In this case, we simply write $B^{[1]}_X((\log B)(pD))$ as $B^{[1]}_X(pD)$.
\end{rem}

\begin{rem}
Taking $i=0$ and $D=0$ in \eqref{non-log smooth 1}, we obtain an exact sequence
\[
0 \to \sO_X \to F_{*}\sO_X \to B_X^{[1]},
\]
and the first map is the Frobenius map.
\end{rem}

\subsection{Surjectivity of the reflexive Cartier operator}\label{Sec:Suj of reflexive Cartier operators}

In this subsection, we focus on the surjectivity of the reflexive Cartier operator.

\begin{lem}\label{lem:exactness of Cartier Operator on F-lift}
Let $X$ be a normal variety and $B$ a reduced divisor on $X$ such that $(X,B)$ is locally $F$-liftable.
Then the $i$-th reflexive Cartier operator 
\[
C^{[i]}_{X,B}(D)\colon Z_X^{[i]}((\log B)(pD))\to \Omega_X^{[i]}(\log B)(D)
\]
associated to $D$ is a locally split surjection for all $i\geq 0$ and for every $\Z$-divisor $D$ on $X$.
\end{lem}
\begin{proof}
Since the assertion is local on $X$, we may assume that $(X,B)$ is $F$-liftable.
Let $j\colon U\hookrightarrow X$ be the inclusion of the log smooth locus of $(X,B)$.
By \cite[Proposition 3.2.1 and Variant 3.2.2]{AWZ}, the $i$-th Cartier operator
\[
C_{U,B}^{i}\colon Z^i_U(\log B)\to\Omega_U^{i}(\log B)
\] 
is a split surjection for all $i\geq 0$.
We take a $\Z$-divisor $D$ on $X$. 
Since $C^{i}_{U,B}(D)=C^i_{U,B}\otimes \sO_U(D)$ by Remark \ref{rem:tensoring Z-divs},
the map
\[
C^{i}_{U,B}(D)\colon Z^i_U((\log B)(pD))\to \Omega_U^{i}(\log B)(D)
\] 
is a split surjection. 
Taking the pushforward by $j$, we obtain the assertion.
\end{proof}

\begin{lem}\label{lem:B:S_3toC:surj}
Let $X$ be a normal variety, $B$ a reduced divisor on $X$, and $Z$ the non-log smooth locus of $(X,B)$. Let $i\geq 0$ be an integer. 
Suppose that $Z$ has codimension at least three and $B_X^{[i]}(\log B)$ satisfies $(S_3)$. Then $C_{X,B}^{[i]}\colon Z_X^{[i]}(\log B)\to \Omega_X^{[i]}(\log B)$
is surjective.
\end{lem}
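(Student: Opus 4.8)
The plan is to exploit the exact sequences from Lemma \ref{lem:Cartier operators} together with a local-cohomology argument. Set $j\colon U\hookrightarrow X$ for the inclusion of the log smooth locus, so $Z=X\setminus U$ has codimension $\ge 3$. From Lemma \ref{lem:Cartier operators} (with $D=0$) we have, on all of $X$, the left-exact sequences
\[
0 \to Z^{[i]}_X(\log\,B) \to F_{*}\Omega^{[i]}_X(\log\,B) \to B^{[i+1]}_X(\log\,B),
\qquad
0 \to B^{[i]}_X(\log\,B)\to Z^{[i]}_X(\log\,B)\overset{C^{[i]}_{X,B}}{\to} \Omega^{[i]}_X(\log\,B),
\]
both of which are right-exact on $U$. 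The first step is to identify $\mathrm{Coker}(C^{[i]}_{X,B})$: pushing forward the short exact sequence (\ref{log smooth 2}) on $U$ by $j$, the obstruction to surjectivity of $C^{[i]}_{X,B}=j_*C^i_{U,B}$ lies in $R^1j_*B^i_U(\log\,B)$, which is supported on $Z$. So it suffices to show $(R^1j_*B^i_U(\log\,B))_x = 0$ for each $x\in Z$, equivalently $H^2_{Z}(B^{[i]}_X(\log\,B))$ vanishes in the appropriate local sense (using that $B^{[i]}_X(\log\,B)=j_*B^i_U(\log\,B)$ is already reflexive, hence $(S_2)$, so $H^0_Z=H^1_Z=0$ for it and the relevant piece of $R^1j_*$ is $H^2_Z$).

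The second step is the standard depth/local-cohomology translation: for a coherent sheaf $\mathcal F$ on $X$ and a closed set $Z$, one has $H^k_Z(\mathcal F)_x=0$ for all $k<\mathrm{depth}_{\sO_{X,x}}\mathcal F_x$ trivially, but more precisely $H^k_Z(\mathcal F)=0$ for $k < \min_{x\in Z}\{\mathrm{depth}_{\sO_{X,x}}\mathcal F_x\}$ — and conversely one controls $H^2$ once depth is $\ge 3$ at every point of $Z$. Concretely, because $B^{[i]}_X(\log\,B)$ satisfies $(S_3)$ and $\mathrm{codim}_X Z\ge 3$ implies $\dim\sO_{X,x}\ge 3$ for $x\in Z$, we get $\mathrm{depth}_{\sO_{X,x}}B^{[i]}_X(\log\,B)_x\ge 3$ for all $x\in Z$; hence $H^0_Z=H^1_Z=H^2_Z$ of $B^{[i]}_X(\log\,B)$ all vanish (in the local sense at each $x\in Z$). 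The key point is then that the piece of $R^1j_*B^i_U(\log\,B)$ obstructing surjectivity is exactly $H^2_Z$ of the reflexive extension $B^{[i]}_X(\log\,B)$: from the exact sequence $0\to H^0_Z(\mathcal F)\to \mathcal F\to j_*(\mathcal F|_U)\to H^1_Z(\mathcal F)\to 0$ and $R^1j_*(\mathcal F|_U)\cong \mathcal{H}^2_Z(\mathcal F)$ for $\mathcal F$ coherent on $X$, applied to $\mathcal F = B^{[i]}_X(\log\,B)$, we conclude $R^1j_*B^i_U(\log\,B)=\mathcal H^2_Z(B^{[i]}_X(\log\,B))=0$, and therefore $j_*$ applied to (\ref{log smooth 2}) stays right-exact, i.e. $C^{[i]}_{X,B}$ is surjective.

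The main obstacle — and the step to be careful with — is the precise bookkeeping of which local cohomology group of which sheaf obstructs the surjectivity after applying $j_*$ to (\ref{log smooth 2}). One must take the sheafy long exact sequence for $j_*$ applied to $0\to B^i_U(\log\,B)\to Z^i_U(\log\,B)\to \Omega^i_U(\log\,B)\to 0$, observe $j_*B^i_U(\log\,B)=B^{[i]}_X(\log\,B)$ and $j_*Z^i_U(\log\,B)=Z^{[i]}_X(\log\,B)$ and $j_*\Omega^i_U(\log\,B)=\Omega^{[i]}_X(\log\,B)$ by definition, so that $\mathrm{Coker}(C^{[i]}_{X,B})\hookrightarrow R^1j_*B^i_U(\log\,B)$. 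Then relate $R^1j_*$ on $U$ to local cohomology on $X$ via the exact sequence $R^0j_*(\mathcal F|_U)\to R^1\Gamma_Z(\mathcal F)[\text{shifted}]$; the cleanest route is: for any coherent $\mathcal F$ on $X$, there is an exact sequence $0\to \mathcal H^0_Z(\mathcal F)\to\mathcal F\to j_*(j^*\mathcal F)\to\mathcal H^1_Z(\mathcal F)\to 0$ and isomorphisms $R^q j_*(j^*\mathcal F)\cong \mathcal H^{q+1}_Z(\mathcal F)$ for $q\ge 1$. Applying this with $\mathcal F=B^{[i]}_X(\log\,B)$ (already reflexive, hence $(S_2)$, hence $\mathcal H^0_Z=\mathcal H^1_Z=0$, which also confirms $j^*\mathcal F = B^i_U(\log\,B)$ and $j_*j^*\mathcal F=\mathcal F$) gives $R^1j_*B^i_U(\log\,B)\cong\mathcal H^2_Z(B^{[i]}_X(\log\,B))$. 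The $(S_3)$ hypothesis together with $\mathrm{codim}\,Z\ge 3$ forces this $\mathcal H^2_Z$ to vanish, which completes the proof. I would also remark that the same argument will be reused in the proof of Theorem \ref{Introthm:Flenner type ext thm} with $B^{[1]}_X(\log\,B)$ in place of $B^{[i]}_X(\log\,B)$.
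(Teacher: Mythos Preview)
Your argument is correct, but it takes a different (though closely related) route from the paper's own proof. The paper works directly with the image: from the short exact sequence
\[
0 \to B^{[i]}_X(\log\,B) \to Z^{[i]}_X(\log\,B) \to \Im(C^{[i]}_{X,B}) \to 0,
\]
the depth lemma (\cite[Lemma~2.60]{Kol13}) gives, for each $x\in Z$, $\mathrm{depth}_{\sO_{X,x}}\Im(C^{[i]}_{X,B})_x \geq \min\{\mathrm{depth}\,B^{[i]}_x - 1,\ \mathrm{depth}\,Z^{[i]}_x\} \geq \min\{3-1,2\} = 2$. Hence $\Im(C^{[i]}_{X,B})$ is $(S_2)$, and since it agrees with the reflexive sheaf $\Omega^{[i]}_X(\log\,B)$ outside the codimension-$\geq 3$ locus $Z$, they coincide.

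Your approach instead controls the cokernel via $R^1j_*$ and the identification $R^1j_*(B^{i}_U(\log\,B))\cong \mathcal{H}^2_Z(B^{[i]}_X(\log\,B))$, then kills $\mathcal{H}^2_Z$ using that $(S_3)$ together with $\mathrm{codim}\,Z\ge 3$ forces depth $\ge 3$ at every point of $Z$. This is entirely valid and perhaps more conceptual, but it invokes more machinery (the local-cohomology long exact sequence and the depth characterization of vanishing of $\mathcal{H}^{<n}_Z$) than the paper's two-line depth-lemma argument. Both arguments are ultimately unpacking the same fact---depth $\ge 3$ on the kernel term at points of $Z$---so neither is more general than the other; the paper's version is just shorter.
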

\begin{proof}
By \eqref{non-log smooth 2}, we have the exact sequence
\[ 0\to B^{[i]}_X(\log B)\to Z^{[i]}_X(\log B)\xrightarrow{C_{X,B}^{[i]}} \Omega^{[i]}_X(\log B).\]
Since $B^{[i]}_X(\log B)$ satisfies $(S_3)$, we obtain $\mathrm{depth}_{\m_x}(B^{[i]}_X(\log B)_{x})\geq 3$ for every $x\in Z$.
Similarly, since $Z^{[i]}_X(\log B)$ satisfies $(S_2)$, we have $\mathrm{depth}_{\m_x}(Z^{[i]}_X(\log B)_{x})\geq 2$ for every $x\in Z$.
Then the depth lemma \cite[Lemma 2.60]{Kol13} shows that $\mathrm{depth}_{\m_x}(\mathrm{Im}(C_{X,B}^{[i]})_{x})\geq 2$ for every $x\in Z$. Since $\mathrm{Im}(C_{X,B}^{[i]})$ coincides with $\Omega^{[i]}_X(\log B)$ outside $Z$, it follows that $\mathrm{Im}(C_{X,B}^{[i]})$ satisfies $(S_2)$ and $\mathrm{Im}(C_{X,B}^{[i]})=\Omega^{[i]}_X(\log B)$, as desired.
\end{proof}

\begin{lem}\label{prop:surjectivity of higher codimension case}
Let $X$ be a normal variety, $B$ a reduced divisor on $X$, and $n\geq 2$ an integer.
Suppose that one of the following conditions holds:
\begin{enumerate}
    \item[\textup{(1)}] The singular locus of $X$ has codimension at least $n+1$ and $X$ satisfies $(S_{n+1})$.
    \item[\textup{(2)}] $X$ is $F$-injective and $X$ satisfies $(S_n)$.
\end{enumerate}
Then $B_X^{[1]}$ satisfies $(S_n)$.
In particular, if $n=3$ and the non-log smooth locus of $(X,B)$ has codimension at least three, then 
$C_{X,B}^{[1]}\colon Z_X^{[1]}(\log B)\to \Omega_X^{[1]}(\log B)$ is surjective.
\end{lem}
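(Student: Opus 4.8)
The plan is to reduce everything to a depth estimate for the cokernel $B_X^{[1]}$ of the Frobenius map, using the two exact sequences available on $X$: the Frobenius sequence
\[
0 \to \sO_X \to F_*\sO_X \to B_X^{[1]}
\]
(right-exact on the log smooth locus $U$, by Lemma \ref{lem:Cartier operators} and the remark following it), and, once $(S_3)$ is in hand, Lemma \ref{lem:B:S_3toC:surj} for the final surjectivity statement. So the heart of the matter is the first assertion: under either hypothesis (1) or (2), $B_X^{[1]}$ satisfies $(S_n)$.

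First I would fix a point $x\in X$ and set $c\coloneqq \operatorname{depth}_{\sO_{X,x}}(\sO_{X,x})$, $d\coloneqq \dim\sO_{X,x}$; since $X$ is normal we automatically have $c\ge 2$, and $F_*\sO_{X,x}$ has the same depth and dimension as $\sO_{X,x}$ because $k$ is perfect and Frobenius is finite. The cokernel $Q$ of $\sO_{X,x}\to F_*\sO_{X,x}$ on the punctured spectrum agrees with $B_X^{[1]}$ there; let $Q'$ denote the actual cokernel sheaf. Applying the depth lemma (\cite[Lemma 2.60]{Kol13}) to $0\to\sO_X\to F_*\sO_X\to Q'\to 0$ gives $\operatorname{depth}(Q'_x)\ge \min\{c, \operatorname{depth}(F_*\sO_{X,x})\}-1$ in general, but the useful direction is the converse estimate: from the long exact sequence of local cohomology,
\[
H^{i}_{\m_x}(Q'_x) \;\text{sits between}\; H^{i}_{\m_x}(F_*\sO_{X,x}) \;\text{and}\; H^{i+1}_{\m_x}(\sO_{X,x}),
\]
so $H^i_{\m_x}(Q'_x)=0$ for $1\le i\le \min\{c-1,\, \operatorname{depth}F_*\sO_{X,x}-1\}$ provided the connecting map $H^i_{\m_x}(Q'_x)\to H^{i+1}_{\m_x}(\sO_{X,x})$ is injective. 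Under hypothesis (2), $X$ is $F$-injective, so $F\colon H^{i+1}_{\m_x}(\sO_{X,x})\to H^{i+1}_{\m_x}(\sO_{X,x})$ is injective; since this Frobenius factors through $H^{i+1}_{\m_x}(F_*\sO_{X,x})$, the map $H^{i+1}_{\m_x}(\sO_{X,x})\to H^{i+1}_{\m_x}(F_*\sO_{X,x})$ is injective, which forces the connecting homomorphism one step earlier to vanish on the relevant range — hence $H^i_{\m_x}(Q'_x)=0$ for $i<c$, and combined with $(S_n)$ for $X$ (so $c\ge\min\{n,d\}$) this yields $\operatorname{depth}(B_X^{[1]})_x\ge\min\{n,d\}$, i.e. $(S_n)$. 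Under hypothesis (1), at a singular point $x$ we have $d\ge n+1$ and $(S_{n+1})$ gives $c\ge n+1$; at such points I would instead use that $Q'_x$ has support of dimension $d$ and argue that the jump only loses one in depth, so $\operatorname{depth}(Q'_x)\ge n$; at a regular point, $B_X^{[1]}$ is locally free (Kunz) hence satisfies $(S_n)$ trivially. Either way $B_X^{[1]}$ is $(S_n)$.

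For the "in particular" clause, take $n=3$: the first part gives that $B_X^{[1]}(\log B)=B_X^{[1]}$ satisfies $(S_3)$, and by hypothesis the non-log smooth locus of $(X,B)$ has codimension at least three, so Lemma \ref{lem:B:S_3toC:surj} applies verbatim with $i=1$ to conclude that $C_X^{[1]}\colon Z_X^{[1]}(\log B)\to\Omega_X^{[1]}(\log B)$ is surjective.

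The main obstacle I anticipate is the bookkeeping in case (1): one must carefully separate the behaviour of $B_X^{[1]}$ at regular points (where it is locally free, no condition needed), at singular points of $X$ (where $(S_{n+1})$ plus codimension $\ge n+1$ must be converted, via the Frobenius short exact sequence and the depth lemma, into $\operatorname{depth}\ge n$), making sure that the loss of exactly one unit of depth in passing from $F_*\sO_X$ to its quotient does not drop below the required threshold — this is exactly why the hypothesis is $(S_{n+1})$ rather than $(S_n)$ in case (1), while $F$-injectivity in case (2) precisely kills the obstruction and lets $(S_n)$ suffice. The case-(2) argument, by contrast, is clean once one observes that $F$-injectivity of $X$ forces injectivity of $H^\bullet_{\m_x}(\sO_{X,x})\to H^\bullet_{\m_x}(F_*\sO_{X,x})$.
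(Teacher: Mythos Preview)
Your proposal is correct and follows essentially the same route as the paper: both arguments pass to the cokernel $\mathcal{B}$ of $\sO_X\to F_*\sO_X$, use the depth lemma (with the codimension and $(S_{n+1})$ hypotheses) for case (1) and the long exact sequence in local cohomology together with injectivity of the Frobenius action for case (2) to obtain $(S_n)$ for $\mathcal{B}=B_X^{[1]}$, and then invoke Lemma \ref{lem:B:S_3toC:surj} for the surjectivity of $C_X^{[1]}$. The paper's write-up is more compressed (it does not explicitly separate regular and singular points in case (1), and in case (2) it reads the vanishing of $H^{r-1}_{\m_x}(\mathcal{B}_x)$ directly from the four-term piece $H^{r-1}_{\m_x}(\sO_{X,x})\to H^{r-1}_{\m_x}(\mathcal{B}_x)\to H^r_{\m_x}(\sO_{X,x})\xrightarrow{F} H^r_{\m_x}(\sO_{X,x})$), but the content is the same.
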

\begin{proof}
We prove that $B_X^{[1]}=B_X^{[1]}(\log B)$ satisfies $(S_n)$. Then the last assertion is an immediate consequence of Lemma \ref{lem:B:S_3toC:surj}. 
We consider the exact sequence 
\[
0\to \sO_X \to F_{*}\sO_X \to B_X^{[1]},
\]
and let $\mathcal{B}\coloneqq \mathrm{Im}(F_{*}\sO_X \to B_X^{[1]})$.
Suppose that (1) holds. Then the depth lemma shows that $\mathcal{B}$ satisfies $(S_n)$. Therefore, $B_X^{[1]}=\mathcal{B}$ and $B_X^{[1]}$ satisfies $(S_{n})$.
Next, suppose that (2) holds. We take a point $x\in X$ and consider the exact sequence
\[
H^{r}_{\m_x}(\sO_{X,x})\to H^{r}_{\m_x}(\mathcal{B}_{x}) \to H^{r+1}_{\m_x}(\sO_{X,x}) \xrightarrow{F} H^{r+1}_{\m_x}(\sO_{X,x})
\]
for all $r\geq 0$.
Since the third map is injective by assumption, we obtain $\mathrm{depth}_{\m_x}(\mathcal{B}_{x})\geq \mathrm{depth}_{\m_x}(\sO_{X,x})$, 
and thus $\mathcal{B}$ satisfies $(S_n)$. 
Therefore, $B_X^{[1]}=\mathcal{B}$ and $B_X^{[1]}$ satisfies $(S_{n})$.
\end{proof}

\section{Extension theorem}\label{sec:ext thm}

\subsection{Proof of Theorems \ref{Introthm:ext thm}, \ref{Introthm:linearly reductive quotient}, and \ref{Introthm:Flenner type ext thm}}
The surjectivity of the reflexive Cartier operator shows the logarithmic extension theorem.

\begin{thm}\label{thm:generalized ext thm}
Let $X$ be a normal variety, $B$ a reduced divisor on $X$, and $D$ a $\Q$-Cartier $\Z$-divisor on $X$.
Let $f\colon Y\to X$ be a proper birational morphism from a normal variety $Y$.
We fix an integer $i\geq 0$.
Suppose that the $i$-th reflexive Cartier operator 
\[
C^{[i]}_{X,B}(p^{n-1}D)\colon Z_X^{[i]}((\log B)(p^nD)) \to \Omega_X^{[i]}(\log B)(p^{n-1}D)
\]
associated to $p^{n-1}D$ is surjective for all $n>0$.
Then the restriction map
\[
f_{*}\Omega^{[i]}_Y(\log B_Y)(f^{*}D)\hookrightarrow\Omega_X^{[i]}(\log B)(D).
\]
is an isomorphism, where $B_Y\coloneqq f_{*}^{-1}B+E$ and $E$ is the reduced $f$-exceptional divisor.
\end{thm}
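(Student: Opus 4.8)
The plan is to reduce the statement to a depth/reflexivity estimate on $X$ for the pushforward sheaf $f_*\Omega^{[i]}_Y(\log B_Y)(f^*D)$, using the surjectivity hypothesis on the reflexive Cartier operators twisted by the powers $p^{n-1}D$. First I would note that the restriction map is injective (it is injective on the common log smooth locus $U$, and both sheaves are torsion-free), so the content is surjectivity, which is a question in codimension $\geq 2$ on $X$; in particular we may work locally and assume $X$ affine. Write $\mathcal{G}\coloneqq f_*\Omega^{[i]}_Y(\log B_Y)(f^*D)$. Since $\Omega^{[i]}_X(\log B)(D)$ is reflexive, hence $(S_2)$, it suffices to show $\mathcal{G}$ is reflexive and agrees with $\Omega^{[i]}_X(\log B)(D)$ in codimension one — the latter is clear because $f$ is an isomorphism over the generic points of $B$ and of $X$, and $E$ is $f$-exceptional. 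So the crux is: $\mathcal{G}$ satisfies $(S_2)$, equivalently $\mathrm{depth}_{\sO_{X,x}}\mathcal{G}_x \geq 2$ at every point $x$ of the non-log-smooth locus $Z$ of $(X,B)$.

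The mechanism for producing depth is the Cartier operator on $Y$. On $Y$ we have the exact sequence \eqref{non-log smooth 2} applied to the pair $(Y, B_Y)$ and the divisor $f^*D$: for each $n>0$ there is an inclusion $C^{[i]}_{Y,B_Y}(p^{n-1}f^*D)\colon Z^{[i]}_Y(\log B_Y)(p^nf^*D)\to \Omega^{[i]}_Y(\log B_Y)(p^{n-1}f^*D)$, right-exact over the log smooth locus of $(Y,B_Y)$. Pushing forward by $f$ and using that $f_*$ is left exact, I get for each $n$ a left-exact sequence on $X$ relating $f_*B^{[i]}_Y$, $f_*Z^{[i]}_Y(p^nf^*D)$ and $f_*\Omega^{[i]}_Y(\log B_Y)(p^{n-1}f^*D)$. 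The key point is to identify the sheaves $f_*Z^{[i]}_Y(\log B_Y)(p^nf^*D)$ and $f_*\Omega^{[i]}_Y(\log B_Y)(p^{n-1}f^*D)$ with their counterparts on $X$, i.e. $Z^{[i]}_X(\log B)(p^nD)$ and $\Omega^{[i]}_X(\log B)(p^{n-1}D)$: this holds because, being defined as pushforwards from the log smooth locus $U\subset X$ (over which $f$ is an isomorphism), they are reflexive on $X$, and $f_*$ of a sheaf that is reflexive after restricting to the preimage of $U$ and agrees there is again reflexive with the same restriction — so the two reflexive hulls coincide. After this identification, the surjectivity hypothesis on $C^{[i]}_{X,B}(p^{n-1}D)$ forces the pushed-forward sequence to remain exact on the right, i.e.\ $f_*C^{[i]}_{Y,B_Y}(p^{n-1}f^*D)$ is surjective onto $\Omega^{[i]}_X(\log B)(p^{n-1}D)$ with kernel $B^{[i]}_X((\log B)(p^nD))$, and we get short exact sequences
\[
0\to B^{[i]}_X((\log B)(p^nD)) \to Z^{[i]}_X(\log B)(p^nD) \to \Omega^{[i]}_X(\log B)(p^{n-1}D)\to 0
\]
for all $n>0$, now with $Z^{[i]}_X(\log B)(p^nD)$ replaced on the middle by $f_*Z^{[i]}_Y(\log B_Y)(p^nf^*D)$ which is automatically $(S_2)$ as a pushforward reflexive sheaf. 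Chasing depth through these, together with the companion sequence \eqref{non-log smooth 1} on $Y$ pushed forward (whose middle term $F_*\Omega^{[i]}_X(\log B)(p^nD)$ has depth controlled by that of $\Omega^{[i]}_X(\log B)(p^nD)$, which is $(S_2)$), I would run an induction that transfers the $(S_2)$ property of the high-twist sheaves down to $\mathcal{G}=f_*\Omega^{[i]}_Y(\log B_Y)(f^*D)$ — the Frobenius pushforward lets one "gain" a power of $p$ each step and the $\Q$-Cartier hypothesis on $D$ is what makes $f^*D$ (hence $p^nf^*D$) a genuine $\Z$-divisor so all these twists make sense and the identifications with pullbacks are legitimate.

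The main obstacle, and the place where I expect to have to be careful, is precisely the commutation of $f_*$ with the Cartier-operator exact sequences and with Frobenius pushforward on the non-log-smooth locus: a priori $f_*$ is only left exact, so I must argue that the surjectivity on $X$ (the hypothesis) is exactly what is needed to propagate right-exactness after pushforward, and I must ensure that $f_*Z^{[i]}_Y(\log B_Y)(p^nf^*D)$ is genuinely $Z^{[i]}_X(\log B)(p^nD)$ and not merely mapping to it — this uses that $Z^{[i]}_Y$ is itself a pushforward from the log smooth locus of $Y$, that $f$ identifies the two log smooth loci over $U$, and that a composite of pushforwards that are isomorphisms in codimension one onto a reflexive sheaf is the reflexive hull. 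Once these identifications are in place, the depth bookkeeping via the depth lemma (\cite[Lemma 2.60]{Kol13}) is routine, and the conclusion that the injective restriction map $f_*\Omega^{[i]}_Y(\log B_Y)(f^*D)\hookrightarrow \Omega^{[i]}_X(\log B)(D)$ is an isomorphism follows from $(S_2)$ plus agreement in codimension one.
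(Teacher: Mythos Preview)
Your descent scheme is on the right track, but it has no base case, and that is a genuine gap. You want to deduce that $f_*\Omega^{[i]}_Y(\log B_Y)(p^{n-1}f^*D)$ is $(S_2)$ from the $(S_2)$-ness of $f_*Z^{[i]}_Y(\log B_Y)(p^nf^*D)$; to get the latter you appeal to the sequence \eqref{non-log smooth 1} pushed forward, whose middle term is $F_*\,f_*\Omega^{[i]}_Y(\log B_Y)(p^nf^*D)$ --- and for \emph{that} to be $(S_2)$ you need $f_*\Omega^{[i]}_Y(\log B_Y)(p^nf^*D)$ to be $(S_2)$, which is exactly the statement one level up. Your claim that $f_*Z^{[i]}_Y\cong Z^{[i]}_X$ ``because pushforwards of reflexive sheaves from the log smooth locus are reflexive'' is not valid: $f_*$ of a reflexive sheaf on $Y$ is in general only torsion-free on $X$, so this identification is not automatic and cannot be used to start the induction. (Also, $f^*D$ is only a $\Q$-divisor along $E$, not a $\Z$-divisor as you assert; but this is secondary.)

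The paper breaks the infinite regress by introducing an auxiliary effective $\Q$-divisor $A_Y$ with $\Supp(A_Y)=E$ and $\lfloor f^*D+A_Y\rfloor=\lfloor f^*D\rfloor$, and by using that $D$ is torsion (we may assume $X$ affine) so that $\{p^r f^*D\}_r$ is finite. Then for $n\gg 0$ the divisor $p^nA_Y$ is so large along every component of $E$ that $f_*\Omega^{[i]}_Y(\log B_Y)(p^n(f^*D+A_Y))=\Omega^{[i]}_X(\log B)(p^nD)$ holds \emph{tautologically} (any reflexive section on $X$ extends once we allow arbitrarily high poles on $E$). This is the missing base case. From there one runs your descent with the twist $p^{\bullet}(f^*D+A_Y)$ instead of $p^{\bullet}f^*D$: the depth argument on \eqref{non-log smooth 1} gives $f_*Z^{[i]}_Y(\log B_Y)(p^n(f^*D+A_Y))\cong Z^{[i]}_X(\log B)(p^nD)$, then the surjectivity hypothesis on $C^{[i]}_{X,B}(p^{n-1}D)$ together with the commutative square forces $f_*\Omega^{[i]}_Y(\log B_Y)(p^{n-1}(f^*D+A_Y))\cong \Omega^{[i]}_X(\log B)(p^{n-1}D)$, and one repeats down to $n=0$; finally $\lfloor f^*D+A_Y\rfloor=\lfloor f^*D\rfloor$ lets you drop $A_Y$.
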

\begin{proof}
Since the assertion is local on $X$, we may assume that $X$ is affine and $D$ is torsion.
Take an effective $\Q$-divisor $A_Y\geq 0$ such that $\Supp(A_Y)=E$ and $\lfloor f^{*}D+A_Y \rfloor=\lfloor f^{*}D \rfloor$.
Since $\{p^rf^{*}D\mid r\geq 0\}$ is a finite set, 
we can take $n\gg0$ so that, for every irreducible component of $E$, its coefficient in $p^nA_Y$ is bigger than the pole of rational sections of $\Omega^{[i]}_Y(\log B_Y)(p^{r}f^{*}D)$ for all $r\geq 0$.
In particular, by taking $r=n$, we have an isomorphism
\[
f_{*}\Omega^{[i]}_Y(\log B_Y)(p^n(f^{*}D+A_Y))\cong\Omega_X^{[i]}(\log B)(p^nD).
\]
By \eqref{non-log smooth 1}, we have the following exact sequence:
\begin{multline*}
0\to Z^{[i]}_Y((\log B_Y)(p^n(f^{*}D+A_Y)))\to F_{*}\Omega^{[i]}_Y(\log B_Y)(p^n(f^{*}D+A_Y))\\
\to B^{[i+1]}_Y((\log B_Y)(p^n(f^{*}D+A_Y))).
\end{multline*}
Taking the pushforward by $f$, we obtain the following exact sequence:
\begin{multline*}
0\to f_{*}Z^{[i]}_Y((\log B_Y)(p^n(f^{*}D+A_Y)))\to F_{*}\Omega_X^{[i]}(\log B)(p^nD)\\\to f_{*}B^{[i+1]}_Y((\log B_Y)(p^n(f^{*}D+A_Y))).
\end{multline*}
Since the middle term is reflexive and the last term is torsion-free, the first term is reflexive.
Thus, the restriction map 
\[
f_{*}Z^{[i]}_Y((\log B_Y)(p^n(f^{*}D+A_Y)))\hookrightarrow  Z_X^{[i]}((\log B)(p^nD))
\]
is an isomorphism.
By \eqref{non-log smooth 2}, we have the following commutative diagram:
\[
\xymatrix@C=60pt{
 f_{*}Z^{[i]}_Y((\log B_Y)(p^n(f^{*}D+A_Y)))\ar[r]^{f_{*}C^{[i]}_{Y,B_Y}(p^{n-1}(f^{*}D+A_Y))}\ar[d]^{\cong} & f_{*}\Omega^{[i]}_Y(\log B_Y)(p^{n-1}(f^{*}D+A_Y))\ar@{^{(}->}[d] \\
  Z_X^{[i]}((\log B)(p^nD))\ar@{->>}[r]^{C^{[i]}_{X,B}(p^{n-1}D)}& \Omega_X^{[i]}(\log B)(p^{n-1}D), \\
}
\]
where both vertical maps are the restriction maps.
Since the lower horizontal map is surjective by assumption, 
it follows that 
\[
f_{*}\Omega^{[i]}_Y(\log B_Y)(p^{n-1}(f^{*}D+A_Y))\cong\Omega_X^{[i]}(\log B)(p^{n-1}D).
\]
Repeating this procedure, we conclude that  
\[
f_{*}\Omega^{[i]}_Y(\log B_Y)(f^{*}D) =f_{*}\Omega^{[i]}_Y(\log B_Y)(f^{*}D+A_Y) \cong\Omega_X^{[i]}(\log B)(D),
\]
as desired.
\end{proof}

\begin{proof}[Proof of Theorem \ref{Introthm:ext thm}]
Taking $D=0$ in Theorem \ref{thm:generalized ext thm}, we obtain the assertion.
\end{proof}

\begin{thm}\label{thm:ext thm for F-lift}
Let $X$ be a normal variety and $B$ a reduced divisor on $X$ such that $(X, B)$ is locally $F$-liftable.
Let $D$ be a $\Q$-Cartier $\Z$-divisor on $X$ and $f\colon Y\to X$ a proper birational morphism from a normal variety $Y$.
Then the restriction map
\[
f_{*}\Omega^{[i]}_Y(\log B_Y)(f^{*}D)\hookrightarrow\Omega_X^{[i]}(\log B)(D).
\]
is an isomorphism for all $i\geq 0$, where $B_Y\coloneqq f_{*}^{-1}B+E$ and $E$ is the reduced $f$-exceptional divisor.
In particular, $(X,B)$ satisfies the logarithmic extension theorem.
\end{thm}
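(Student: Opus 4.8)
The plan is to deduce Theorem \ref{thm:ext thm for F-lift} directly from Theorem \ref{thm:generalized ext thm}, so the only real task is to verify that the hypothesis of that theorem — surjectivity of the reflexive Cartier operators $C^{[i]}_{X,B}(p^{n-1}D)$ for all $n\in\Z_{>0}$ — is satisfied under the local $F$-liftability assumption. Since the conclusion is local on $X$, I would immediately reduce to the case where $(X,B)$ is $F$-liftable and $D$ is, say, torsion (as in the proof of Theorem \ref{thm:generalized ext thm}); in fact $D$ being a $\Z$-divisor already suffices, because by Remark \ref{rem:tensoring Z-divs} tensoring the log-smooth Cartier sequence with $\sO_U(mD)$ for any $\Z$-divisor $mD$ commutes with everything in sight.

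The key input is Lemma \ref{lem:exactness of Cartier Operator on F-lift}: for a locally $F$-liftable pair and any $\Z$-divisor $D'$, the sequence
\[
0\to B_X^{[i]}(\log\,B)(D')\to Z_X^{[i]}(\log\,B)(D')\overset{C^{[i]}_{X,B}(D')}{\to}\Omega_X^{[i]}(\log\,B)(D')\to 0
\]
is (locally split) exact, in particular $C^{[i]}_{X,B}(D')$ is surjective. Applying this with $D'=p^{n-1}D$ — which is again a $\Z$-divisor since $D$ is — gives exactly the surjectivity of $C^{[i]}_{X,B}(p^{n-1}D)\colon Z_X^{[i]}(\log\,B)(p^nD)\to\Omega_X^{[i]}(\log\,B)(p^{n-1}D)$ required in Theorem \ref{thm:generalized ext thm}, using the identifications $Z_X^{[i]}((\log\,B)(p^nD))=Z_X^{[i]}(\log\,B)(p^nD)$ and $\Omega_X^{[i]}(\log\,B)(p^{n-1}D)=\Omega_X^{[i]}(\log\,B)(p^{n-1}D)$ coming from Remark \ref{rem:tensoring Z-divs} (valid precisely because $p^{n-1}D$ and $p^nD$ are $\Z$-divisors). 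Feeding this into Theorem \ref{thm:generalized ext thm} yields that $f_{*}\Omega^{[i]}_Y(\log\,B_Y)(f^{*}D)\hookrightarrow\Omega_X^{[i]}(\log\,B)(D)$ is an isomorphism for every proper birational $f\colon Y\to X$ and every $i$.

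Finally, for the ``in particular'' clause: taking $D=0$ specializes the displayed isomorphism to $f_{*}\Omega^{[i]}_Y(\log\,f^{-1}_{*}B+E)\xrightarrow{\ \sim\ }\Omega_X^{[i]}(\log\,B)$ for all $i$ and all proper birational $f$, which is precisely the statement that $(X,B)$ satisfies the logarithmic extension theorem in the sense of Definition \ref{def:log ext thm}. I do not anticipate a genuine obstacle here — the content has been front-loaded into Lemma \ref{lem:exactness of Cartier Operator on F-lift} and Theorem \ref{thm:generalized ext thm}. The one point requiring a little care is the bookkeeping with divisors: one must check that $p^{n-1}D$ remains a $\Z$-divisor (immediate, as $D$ is a $\Z$-divisor) so that Remark \ref{rem:tensoring Z-divs} applies and the ``associated to $D$'' Cartier operator notation reduces to the honest tensor-twisted Cartier operator to which Lemma \ref{lem:exactness of Cartier Operator on F-lift} directly speaks.
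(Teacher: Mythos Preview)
Your proposal is correct and is exactly the argument the paper gives: the paper's proof is the single sentence ``The assertion follows from Lemma \ref{lem:exactness of Cartier Operator on F-lift} and Theorem \ref{thm:generalized ext thm},'' and you have simply spelled out the (straightforward) verification that Lemma \ref{lem:exactness of Cartier Operator on F-lift} applied with the $\Z$-divisor $p^{n-1}D$ supplies the surjectivity hypothesis of Theorem \ref{thm:generalized ext thm}.
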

\begin{proof}
The assertion follows from Lemma \ref{lem:exactness of Cartier Operator on F-lift} and Theorem \ref{thm:generalized ext thm}.
\end{proof}

\begin{rem}
Extendability in the type of Theorems \ref{thm:generalized ext thm} and \ref{thm:ext thm for F-lift} is useful especially when the Cartier index of $D$ is divisible by $p$. 
We refer to \cite[Remark 4.10]{Kaw3} for details.
\end{rem}

\begin{proof}[Proof of Theorem \ref{Introthm:linearly reductive quotient}]
The assertions follow from Theorems \ref{thm:AWZ}, \ref{thm:F-liftable singularities}, and \ref{thm:ext thm for F-lift}.
\end{proof}

\begin{proof}[Proof of Theorem \ref{Introthm:Flenner type ext thm}]
The assertion follows from Lemma \ref{prop:surjectivity of higher codimension case} and Theorem \ref{Introthm:ext thm}.
\end{proof}

In the next proposition, we see a case where the surjectivity of the reflexive Cartier operator is a necessary condition for the logarithmic extension theorem.

\begin{prop}\label{prop:Converse direction}
Let $X$ be a normal variety.
Suppose that there exists a proper birational morphism $f\colon Y\to X$ from a smooth variety $Y$ such that the reduced $f$-exceptional divisor $E$ is simple normal crossing and $R^{i}f_{*}\sO_Y=0$ for all $i\in\{1,2\}$.
Then $C^{[1]}_{X}\colon Z^{[1]}_{X}\to \Omega_X^{[1]}$ is surjective if and only if $X$ satisfies the logarithmic extension theorem for one-forms.
\end{prop}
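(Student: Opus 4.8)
The ``if'' direction is the main new content; the ``only if'' direction is essentially Theorem~\ref{Introthm:ext thm} applied to the given $f$, so let me organize the plan around proving that the logarithmic extension theorem for one-forms forces $C^{[1]}_X$ to be surjective. The plan is to compute the cokernel of $C^{[1]}_X\colon Z^{[1]}_X\to\Omega^{[1]}_X$ by comparing, on $X$, the exact sequences defining the Cartier operator with their counterparts on $Y$, and to show that the hypotheses $R^1f_*\sO_Y=R^2f_*\sO_Y=0$ together with the extension theorem on $(Y,E)$ kill that cokernel.

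First I would work on $Y$. Since $Y$ is smooth and $E$ is simple normal crossing, the logarithmic Cartier isomorphism on $(Y,E)$ gives the locally split short exact sequences
\[
0\to \sO_Y\to F_*\sO_Y\to B^1_Y\to 0,\qquad
0\to B^1_Y\to Z^1_Y(\log E)\overset{C^1_{Y,E}}{\to}\Omega^1_Y(\log E)\to 0,
\]
where I write $B^1_Y=B^1_Y(\log E)=B^1_Y$ (it does not depend on $E$ by Remark~\ref{rem:ignoring boundary}). Pushing the first sequence forward by $f$ and using $R^1f_*\sO_Y=0$ yields a short exact sequence $0\to\sO_X\to F_*\sO_X\to f_*B^1_Y\to 0$, and comparing with $0\to\sO_X\to F_*\sO_X\to B^{[1]}_X$ identifies $f_*B^1_Y$ with $B^{[1]}_X$ (both are the image of $F_*\sO_X$ in the same ambient sheaf; the surjectivity onto $f_*B^1_Y$ and the inclusion $B^{[1]}_X\hookrightarrow f_*B^1_Y$, which holds because $f_*B^1_Y$ is reflexive as $f_*$ of a locally free sheaf and agrees with $B^{[1]}_X$ on $X_{\reg}$, force equality). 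In particular $R^1f_*B^1_Y=0$ by the long exact sequence, since $R^1f_*\sO_Y=R^2f_*\sO_Y=0$.

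Next I would push the second sequence forward by $f$. Using $R^1f_*B^1_Y=0$, I get
\[
0\to f_*B^1_Y\to f_*Z^1_Y(\log E)\to f_*\Omega^1_Y(\log E)\to 0
\]
exact on the right as well. By the logarithmic extension theorem for one-forms applied to $f\colon Y\to X$, the right-hand term is $\Omega^{[1]}_X(\log\,B)$ with $B=0$, i.e.\ $\Omega^{[1]}_X$; and $f_*B^1_Y=B^{[1]}_X$ from the previous step. It remains to identify $f_*Z^1_Y(\log E)$ with $Z^{[1]}_X$: the inclusion $Z^{[1]}_X\hookrightarrow f_*Z^1_Y(\log E)$ holds because both sheaves are reflexive ($Z^{[1]}_X$ by definition, $f_*Z^1_Y(\log E)$ as $f_*$ of a locally free sheaf, using that $E$ is $f$-exceptional so $f^{-1}_*(0)+E=E$) and they coincide on $X_{\reg}$; conversely $f_*Z^1_Y(\log E)$ sits inside $F_*\Omega^{[1]}_X$ (push forward the inclusion $Z^1_Y(\log E)\hookrightarrow F_*\Omega^1_Y(\log E)$ and use the extension theorem once more, or just restrict to $X_{\reg}$ and use reflexivity), so it is contained in the kernel of $F_*d$, hence in $Z^{[1]}_X$. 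Therefore the displayed sequence reads $0\to B^{[1]}_X\to Z^{[1]}_X\to\Omega^{[1]}_X\to 0$, which is exactly the statement that $C^{[1]}_X$ is surjective. For the converse implication, surjectivity of $C^{[1]}_X$ gives the logarithmic extension theorem for one-forms by Theorem~\ref{Introthm:ext thm} (the case $i=1$, $B$ arbitrary—here the hypotheses of the proposition already provide a resolution, but Theorem~\ref{Introthm:ext thm} needs none).

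The step I expect to be the main obstacle is the identification $f_*Z^1_Y(\log E)\cong Z^{[1]}_X$, and more precisely checking the right-exactness of the pushforward of the second Cartier sequence, which is where the hypothesis $R^2f_*\sO_Y=0$ (equivalently $R^1f_*B^1_Y=0$) is genuinely used; one must be careful that $f_*B^1_Y$ really is $B^{[1]}_X$ and not merely a reflexive sheaf containing it, since otherwise the connecting map in the long exact sequence could fail to vanish. Once that vanishing is in hand, everything else is diagram chasing with reflexive sheaves on a normal variety, using repeatedly that a reflexive sheaf is determined by its restriction to the big open set $X_{\reg}$ (or more generally to the complement of a codimension $\ge 2$ set) together with the $(S_2)$ property of pushforwards of locally free sheaves under proper birational maps.
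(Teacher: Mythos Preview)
Your proposal is correct and follows essentially the same approach as the paper: push forward the two Cartier exact sequences from $Y$, use $R^1f_*\sO_Y=R^2f_*\sO_Y=0$ to obtain $R^1f_*B^1_Y=0$ and hence right-exactness of the pushed-forward Cartier sequence, and then invoke the extension hypothesis to identify $f_*\Omega^1_Y(\log E)\cong\Omega^{[1]}_X$. The only difference is cosmetic: the paper does not bother identifying $f_*Z^1_Y(\log E)$ with $Z^{[1]}_X$, but simply uses the natural inclusion $f_*Z^1_Y(\log E)\hookrightarrow Z^{[1]}_X$ in a commutative square to deduce surjectivity of $C^{[1]}_X$.
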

\begin{proof}
By Theorem \ref{Introthm:ext thm}, it suffices to show the if direction.
By assumption and the exact sequence 
\[
0\to \sO_Y \to F_{*}\sO_Y \to B_{Y}^{1}\to 0,
\]
we obtain $R^1f_{*}B_{Y}^{1}=0$.
By \eqref{log smooth 2} and Remark \ref{rem:ignoring boundary}, we have an exact sequence
\[
0\to B_{Y}^{1} \to Z^{1}_Y(\log E)\xrightarrow{C^1_{Y,E}} \Omega^{1}_Y(\log E)\to 0.
\]
Thus, $f_{*}C^1_{Y,E}\colon f_{*}Z^{1}_Y(\log E)\to f_{*}\Omega^{1}_Y(\log E)$ is surjective.
Since we have the following commutative diagram:
\[
\xymatrix@C=80pt{
 f_{*}Z^{1}_Y(\log E)\ar@{->>}[r]^{f_{*}C^{1}_{Y,E}}\ar@{^{(}->}[d] & f_{*}\Omega^{1}_Y(\log E)\ar[d]^{\cong} \ar[d]\\
  Z_X^{[1]}\ar[r]^{C_{X}^{[1]}}& \Omega_X^{[1]},\\
}
\]
we conclude that $C_{X}^{[1]}\colon Z^{[1]}_{X}\to \Omega_X^{[1]}$ is surjective.
\end{proof}

The next example shows that a quotient singularity by a non-linearly reductive group scheme violates the logarithmic extension theorem, even if the group scheme is reduced.

\begin{eg}\label{example:C_2-quotient}
    Let $k$ be an algebraically closed field of characteristic two.
    A rational double point of type $E_8^2$ over $k$ is a quotient singularity by the cyclic group scheme $\mathbf{C_2}$, as shown in \cite[Table 4]{LMM2}.
    It can be observed that a normal affine saurface $X$ over $k$ that has only such a singularity violates the logarithmic extension theorem as follows:

    Let $f\colon Y\to X$ be the minimal resolution. Since $\omega_X\cong \sO_X$ and $\omega_Y\cong \sO_Y$, the tangent sheaves $T_X$ and $T_Y$ are isomorphic to $\Omega^{[1]}_X$ and $\Omega^{1}_Y$ respectively. Then \cite[Theorem 1.1 (iii)]{Hirokado} shows that $\dim_{k}\Omega_X^{[1]}/f_{*}\Omega^{1}_Y=2$, and in particular, $X$ violates the regular extension theorem. Since the determinant of the intersection matrix of the reduced $f$-exceptional divisor is equal to one, it follows from \cite[8.C]{Gra} that $X$ violates the logarithmic extension theorem.

    In particular, the linearly reductive property in Theorem \ref{Introthm:linearly reductive quotient} (1) and (2) is necessary.
\end{eg}

Graf \cite[Theorem 3]{Graf21} constructed counterexamples to Flenner's extension theorem for $i$-forms for all $i\geq 0$ in positive characteristic assuming the existence of resolutions of singularities.
On the other hand, we do not need resolutions of singularities at least for counterexamples to the theorem for one-forms.

\begin{eg}\label{eg:counterex to Flenner:non-Cohen-Macaulay case}
Let $p$ be a prime number and $d\geq 3$ an integer.
Mukai constructed a smooth projective variety $X$ over a perfect field of characteristic $p$ of $\dim X=d-1$ and an ample Cartier divisor $A$ on $X$ such that $H^0(X, \Omega_X^{1}(-A))\neq 0$
(see \cite[Theorem 2 and Corollary in p.~519]{Mukai}).
Then the singular point of the affine cone \[C_{a}(X,A)=\Spec \bigoplus_{n\geq 0} H^0(X, \sO_X(nA))\] over $X$ has codimension $d$ and $C_{a}(X,A)$ violates the logarithmic extension theorem for one-forms by \cite[Proposition 11.3]{Gra}.
\end{eg}

\subsection{Proof of Theorem \ref{Introthm:tame quotient}}

We recall the \textit{trace map} of sheaves of differential forms (\cite[Proposition 2.2.23]{CR11}, \cite{Garel}, \cite[Section 16]{Kunz}) and its reflexive version (\cite[Proof of Theorem C]{Kawakami-Totaro}).

\begin{defn}\label{def:reflexive trace map}
    Let $g\colon X'\to X$ be a finite surjective morphism of normal varieties.
    Let $U\subset X$ be an open subscheme such that $U$ and $U'\coloneqq g^{-1}(U)$ are contained in the smooth loci of $X$ and $X'$ respectively, and $X\setminus U$ and $X'\setminus U'$ have codimension at least two.
    By abuse of the notation, we also denote $g|_{U'}$ by $g$.
    The \textit{$i$-th trace map} $\tau_{g}^{i}\colon g_{*}\Omega^i_{U'}\to \Omega^i_U$ is defined as 
    \begin{align*}
      g_{*}\Omega^{i}_{U'}\cong g_{*}\mathcal{H}\!\mathit{om}_{\sO_{U'}}(\Omega^{d-i}_{U'},\omega_{U'})&\xrightarrow{\mathrm{nat.}} \mathcal{H}\!\mathit{om}_{\sO_{U}}(g_{*}\Omega^{d-i}_{U'},g_{*}\omega_{U'})\\
      &\xrightarrow{\mathrm{Tr_g}}\mathcal{H}\!\mathit{om}_{\sO_U}(g_{*}\Omega^{d-i}_{U'},\omega_{U})\\
      &\xrightarrow{\circ g^{*}}\mathcal{H}\!\mathit{om}_{\sO_U}(\Omega^{d-i}_{U},\omega_{U})
      \cong \Omega_U^{i},
    \end{align*}
    for all $i\geq 0$, where $d\coloneqq \dim X$ and $\mathrm{Tr}_{g}\colon g_{*}\omega_{U'}\to \omega_U$ denotes the usual trace map.  
    By construction, the map $\tau_{g}^{d}$ coincides with $\mathrm{Tr}_{g}$.
    The \textit{$i$-th reflexive trace map} $\tau_{g}^{[i]}\colon g_{*}\Omega^{[i]}_{X'}\to \Omega^{[i]}_X$ is defined as $j_{*}\tau_{g}^{i}$, where $j\colon U\hookrightarrow X$ is the inclusion.
\end{defn}

\begin{lem}[\textup{cf.~\cite[Proof of Theorem C]{Kawakami-Totaro}}]\label{lem:reflexive trace map}
    Let $g\colon X'\to X$ be a finite surjective morphism of normal varieties of degree prime to $p$.
    Then the $i$-th reflexive trace map \[\tau_{g}^{[i]}\colon g_{*}\Omega_{X'}^{[i]}\to\Omega_X^{[i]}\] 
    is a split surjection for all $i\geq 0$.
\end{lem}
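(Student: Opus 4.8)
The statement to prove is Lemma~\ref{lem:reflexive trace map}: for a finite surjective morphism $g\colon X'\to X$ of normal varieties of degree prime to $p$, the composite $\frac{1}{\deg(g)}\tau_g^{[i]}\circ g^{*}$ equals the identity on $\Omega_X^{[i]}$. Since both $\Omega_X^{[i]}$ and the maps in question are defined by pushing forward from the common open locus $U\subset X$ over which $X$, $X'$ are smooth and $g$ is finite flat (with complements of codimension $\geq 2$), and since $j_*$ is left exact and preserves such composites, it suffices to prove the corresponding identity $\frac{1}{\deg(g)}\tau_g^{i}\circ g^{*}=\mathrm{id}$ on $U$ — i.e.\ in the setting where $g$ is a finite locally free morphism of smooth varieties. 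So first I would reduce to that case, noting that $\deg(g)$ is invertible in $k$ by the degree hypothesis so the normalization $\tfrac{1}{\deg(g)}$ makes sense.

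On the smooth locus, unwinding Definition~\ref{def:reflexive trace map}, the map $\tau_g^i$ is built from the duality isomorphisms $\Omega^i_{U'}\cong \sHom_{\sO_{U'}}(\Omega^{d-i}_{U'},\omega_{U'})$, the natural "evaluate into $g_*$" map, the Grothendieck trace $\mathrm{Tr}_g\colon g_*\omega_{U'}\to\omega_U$, and precomposition with the pullback $g^*\colon \Omega^{d-i}_U\to g_*\Omega^{d-i}_{U'}$. The key point is compatibility of all four ingredients with wedge product and with the pullback $g^*$ on forms: concretely, for $\alpha\in\Omega^i_U$ and $\beta\in\Omega^{d-i}_U$ one has $\tau_g^i(g^*\alpha)\wedge\beta = \mathrm{Tr}_g\bigl(g^*\alpha\wedge g^*\beta\bigr) = \mathrm{Tr}_g\bigl(g^*(\alpha\wedge\beta)\bigr)$, using that $g^*$ is a ring map on the exterior algebra and that $\tau_g^d=\mathrm{Tr}_g$ on top forms (cited from \cite[Proposition 2.2.23(1)]{CR11}). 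So the whole lemma reduces to the classical projection-formula statement $\mathrm{Tr}_g\circ g^*=\deg(g)\cdot\mathrm{id}$ on $\omega_U$; this is exactly the normalization of the trace map for a finite locally free morphism (of constant rank $\deg(g)$), which holds because composing the canonical trace with the unit $\sO_U\to g_*\sO_{U'}$ gives multiplication by the rank. Dividing by $\deg(g)$, which is a unit, and pairing against an arbitrary $\beta$, one concludes $\tfrac{1}{\deg(g)}\tau_g^i(g^*\alpha)=\alpha$ because $\Omega^{d-i}_U\times\Omega^i_U\to\omega_U$ is a perfect pairing on the smooth $U$.

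Then I would push forward by $j$: since $g^{[i]*}$ and $\tau_g^{[i]}$ are by definition $j_*$ of $g^{i*}$ and $\tau_g^i$, and $j_*$ is a functor, $\tfrac{1}{\deg(g)}\tau_g^{[i]}\circ g^{*} = j_*\bigl(\tfrac{1}{\deg(g)}\tau_g^i\circ g^{*}\bigr)=j_*(\mathrm{id})=\mathrm{id}$ on $\Omega_X^{[i]}$, which is the claim; in particular $g^*$ is a split injection. The main obstacle is the bookkeeping in the second paragraph: carefully checking that the duality isomorphism $\Omega^i\cong\sHom(\Omega^{d-i},\omega)$ intertwines $g^*$ (on $\Omega^i$ and on $\Omega^{d-i}$) with the "nat." map into $g_*$ and with $\mathrm{Tr}_g$, i.e.\ that the diagram defining $\tau_g^i$ is genuinely dual to the identity $\mathrm{Tr}_g\circ g^*=\deg(g)\cdot\mathrm{id}$. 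This is precisely the computation indicated in \cite[Proof of Theorem C]{Kawakami-Totaro}, so I would organize the argument around that reference and keep the verification to the compatibility of $\mathrm{Tr}_g$ with $g^*$ and wedge — everything else is formal.
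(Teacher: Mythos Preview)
Your approach is correct and matches the paper's: reduce to the smooth locus $U$, establish $\tau_g^{i}\circ g^{*}=\deg(g)\cdot\mathrm{id}$ there, and push forward by $j$. The only difference is that the paper cites \cite[Proposition~2.2.23~(3)]{CR11} directly for this identity on $U$, whereas you unwind it via the wedge pairing and reduce to the top-form case (note that your justification there invokes the $\sO$-trace rather than the Grothendieck trace on $\omega$, but the needed identity $\mathrm{Tr}_g\circ g^{*}=\deg(g)\cdot\mathrm{id}$ on $\omega_U$ is standard and likewise contained in that reference).
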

\begin{proof}
    We use the notation of Definition \ref{def:reflexive trace map}.
    By \cite[Proposition 2.2.23 (3)]{CR11}, the composition $\tau_{g}^{i}\circ g^{*}\colon \Omega^i_U\to \Omega_U^i$ is the multiplication by $\deg(g)$ for all $i\geq 0$.
    Since $\deg(g)$ is not divisible by $p$, it follows that $\tau_{g}^{i}\circ \frac{1}{\deg(g)}g^{*}$ is the identity.
    Taking the pushforward by the inclusion $j\colon U\hookrightarrow X$, we obtain the assertion.
\end{proof}

\begin{thm}\label{thm:tame quotients}
Let $g\colon X'\to X$ be a finite surjective morphism of normal varieties of degree prime to $p$.
If $X'$ satisfies the regular extension theorem, then so does $X$.
    \end{thm}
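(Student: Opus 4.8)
The plan is to deduce the regular extension theorem for $X$ from that of $X'$ by transporting the extendability through the trace splitting of Lemma \ref{lem:reflexive trace map}. Let $f\colon Y\to X$ be a proper birational morphism from a normal variety $Y$; we must show $f_{*}\Omega^{[i]}_Y\hookrightarrow \Omega^{[i]}_X$ is an isomorphism for all $i$. First I would form the normalization $Y'$ of the fiber product $Y\times_X X'$, giving a commutative square with a proper birational morphism $f'\colon Y'\to X'$ and a finite surjective morphism $g_Y\colon Y'\to Y$; since $g_Y$ has degree dividing $\deg(g)$, its degree is again prime to $p$. Because $X'$ satisfies the regular extension theorem, $f'_{*}\Omega^{[i]}_{Y'}\hookrightarrow \Omega^{[i]}_{X'}$ is an isomorphism.

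Next I would chase the diagram relating pullback and trace maps. Applying $g^{*}\colon \Omega^{[i]}_X\to g_{*}\Omega^{[i]}_{X'}$ and noting $g_{*}\Omega^{[i]}_{X'}\cong g_{*}f'_{*}\Omega^{[i]}_{Y'}=f_{*}(g_Y)_{*}\Omega^{[i]}_{Y'}$, together with the reflexive trace $\tau^{[i]}_{g_Y}\colon (g_Y)_{*}\Omega^{[i]}_{Y'}\to \Omega^{[i]}_Y$, I would produce a commutative square whose horizontal arrows are $\tfrac{1}{\deg}\tau^{[i]}$ on $X$-level and $Y$-level and whose vertical arrows are the restriction maps $f_{*}\Omega^{[i]}_Y\hookrightarrow \Omega^{[i]}_X$ and $f_{*}(g_Y)_{*}\Omega^{[i]}_{Y'}\xrightarrow{\ \sim\ } g_{*}\Omega^{[i]}_{X'}$. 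The key point is that the pullback $g^{*}\colon \Omega^{[i]}_X\to g_{*}\Omega^{[i]}_{X'}$ is split injective by Lemma \ref{lem:reflexive trace map}, and the analogous statement holds over $Y$ for $g_Y$; hence $\Omega^{[i]}_X$ is a direct summand of $g_{*}\Omega^{[i]}_{X'}$ and $f_{*}\Omega^{[i]}_Y$ is a direct summand of $f_{*}(g_Y)_{*}\Omega^{[i]}_{Y'}$, compatibly with the restriction maps. Since the restriction map for the primed varieties is an isomorphism and the restriction map for $X$ is always injective, a direct-summand diagram chase forces $f_{*}\Omega^{[i]}_Y\hookrightarrow \Omega^{[i]}_X$ to be surjective, hence an isomorphism.

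The step I expect to be the main obstacle is verifying the compatibility of the trace maps with the birational pushforwards, i.e., that $\tau^{[i]}_{g}$ on $X$ and $\tau^{[i]}_{g_Y}$ on $Y$ fit into a commutative square with the two restriction maps. This requires that the reflexive trace map is functorial under the base change $Y\to X$; one checks it on the common open locus where everything is smooth and both $Y\to X$ and $Y'\to X'$ are isomorphisms onto open subsets with complements of codimension $\geq 2$, where it reduces to the well-known compatibility of the usual trace map of differential forms with open immersions and flat (here, étale after shrinking) base change, and then extends by reflexivity (pushing forward along $j\colon U\hookrightarrow X$). A secondary technical point is confirming that $Y'$ can be chosen normal with $f'$ proper birational, which follows since $Y\times_X X'$ is generically reduced (as $g$ is generically étale, $p\nmid \deg g$) so its normalization $Y'$ is a normal variety, $f'$ is proper and birational, and $g_Y$ is finite surjective of degree $\deg(g)$ prime to $p$. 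Once these compatibilities are in place, the argument is a formal consequence, and in particular no resolution of singularities is used.

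\begin{proof}[Proof of Theorem \ref{thm:tame quotients}]
Let $f\colon Y\to X$ be a proper birational morphism from a normal variety $Y$. Since the restriction map $f_{*}\Omega^{[i]}_Y\hookrightarrow \Omega^{[i]}_X$ is always injective, it suffices to prove surjectivity for all $i\geq 0$.

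Let $Y'$ be the normalization of an irreducible component of $Y\times_X X'$ dominating $X'$. Since $g$ is generically étale (its degree is prime to $p$), $Y'$ is a normal variety, the induced morphism $f'\colon Y'\to X'$ is proper and birational, and the induced morphism $g_Y\colon Y'\to Y$ is finite surjective of degree $\deg(g)$, which is prime to $p$. We obtain a commutative diagram
\[
\xymatrix{
Y'\ar[r]^{g_Y}\ar[d]_{f'} & Y\ar[d]^{f}\\
X'\ar[r]^{g} & X.
}
\]

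By the regular extension theorem for $X'$, the restriction map $f'_{*}\Omega^{[i]}_{Y'}\hookrightarrow \Omega^{[i]}_{X'}$ is an isomorphism. Applying $g_{*}$ and using $g_{*}f'_{*}=f_{*}(g_Y)_{*}$, we get that the restriction map
\[
f_{*}(g_Y)_{*}\Omega^{[i]}_{Y'}\hookrightarrow g_{*}\Omega^{[i]}_{X'}
\]
is an isomorphism.

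Now consider the reflexive trace maps $\tau^{[i]}_{g}\colon g_{*}\Omega^{[i]}_{X'}\to \Omega^{[i]}_X$ and $\tau^{[i]}_{g_Y}\colon (g_Y)_{*}\Omega^{[i]}_{Y'}\to \Omega^{[i]}_Y$ of Definition \ref{def:reflexive trace map}. These are computed on the common open locus where $X$, $X'$ are smooth and $g$ is étale, with complements of codimension at least two; on this locus the trace map of differential forms is compatible with the (étale) base change along $Y\to X$, since there $f$ and $f'$ are isomorphisms onto open subschemes with complements of codimension at least two. Pushing forward by the inclusion of this locus into $X$, we obtain a commutative diagram
\[
\xymatrix@C=50pt{
f_{*}\Omega^{[i]}_Y\ar[d] & f_{*}(g_Y)_{*}\Omega^{[i]}_{Y'}\ar[l]_{\frac{1}{\deg(g)}f_{*}\tau^{[i]}_{g_Y}}\ar[d]^{\cong}\\
\Omega^{[i]}_X & g_{*}\Omega^{[i]}_{X'}\ar[l]_{\frac{1}{\deg(g)}\tau^{[i]}_{g}}
}
\]
in which the vertical arrows are the restriction maps, the right one being an isomorphism by the previous paragraph.

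By Lemma \ref{lem:reflexive trace map}, the pullback map $g^{*}\colon \Omega^{[i]}_X\to g_{*}\Omega^{[i]}_{X'}$ is split injective, with splitting $\frac{1}{\deg(g)}\tau^{[i]}_{g}$; in particular $\frac{1}{\deg(g)}\tau^{[i]}_{g}$ is surjective. Hence in the diagram above the bottom arrow is surjective, and since the right vertical arrow is an isomorphism, the composite $f_{*}(g_Y)_{*}\Omega^{[i]}_{Y'}\to \Omega^{[i]}_X$ through the top and left arrows is surjective. By commutativity this composite factors through $f_{*}\Omega^{[i]}_Y$, so the left vertical arrow $f_{*}\Omega^{[i]}_Y\hookrightarrow \Omega^{[i]}_X$ is surjective, hence an isomorphism. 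As $i$ was arbitrary, $X$ satisfies the regular extension theorem.
\end{proof}
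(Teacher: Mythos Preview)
Your proof is correct and follows essentially the same route as the paper: form the normalized fiber product to get $f'\colon Y'\to X'$ and $g_Y\colon Y'\to Y$, use the regular extension theorem on $X'$ to identify $f_{*}(g_Y)_{*}\Omega^{[i]}_{Y'}\cong g_{*}\Omega^{[i]}_{X'}$, and then chase the commutative square built from the reflexive trace maps of Lemma~\ref{lem:reflexive trace map} to conclude surjectivity of $f_{*}\Omega^{[i]}_Y\hookrightarrow \Omega^{[i]}_X$. The paper's justification of commutativity is phrased slightly more simply (the two trace maps agree on the locus where $f$ is an isomorphism, which suffices since the target is reflexive), but your version is equivalent.
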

\begin{proof}
Let $f\colon Y\to X$ be a proper birational morphism.
Let $Y'$ be the normalization of the component of the fiber product dominating $X'$, and let $f'\colon Y'\to X'$ and $g_Y\colon Y'\to Y$ be compositions of the normalization and projections.

Since $\deg(g)=\deg(g_Y)$ is not divisible by $p$, the $i$-th reflexive trace maps $\tau_{g}^{[i]}$ and $\tau_{g_Y}^{[i]}$ are split surjections for all $i\geq 0$ by Lemma \ref{lem:reflexive trace map}.
By the construction of the reflexive trace map in Definition \ref{def:reflexive trace map}, we have the following commutative diagram:
\[
\xymatrix@C=80pt{
f_{*}(g_Y)_{*}\Omega^{[i]}_{Y'}\ar[d]\ar@{->>}[r]^-{f_{*}\tau^{[i]}_{g_Y}} & f_{*}\Omega^{[i]}_Y\ar@{^{(}->}[d]\\
 g_{*}\Omega^{[i]}_{X'}\ar@{->>}[r]^-{\tau^{[i]}_{g}} & \Omega_X^{[i]}.\\
}
\]
Suppose that $X'$ satisfies the regular extension theorem.
Then we have an isomorphism
\[f_{*}(g_Y)_{*}\Omega^{[i]}_{Y'}=g_{*}(f')_{*}\Omega^{[i]}_{Y'}\cong g_{*}\Omega^{[i]}_{X'}.\] 
Therefore, we conclude from the above diagram that the restriction map $f_{*}\Omega_Y^{[i]}\hookrightarrow \Omega_X^{[i]}$ is an isomorphism.
\end{proof}

\begin{lem}\label{lem:ext thm for S_2 differential}
Let $i\geq 0$ be an integer.
Let $X$ be a normal variety such that $\Omega^{i}_X$ is reflexive.
Then $X$ satisfies the regular extension theorem for $i$-forms.
\end{lem}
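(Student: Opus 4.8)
The plan is to show that under the hypothesis $\Omega^i_X$ satisfies $(S_2)$, the restriction map $f_*\Omega^{[i]}_Y \hookrightarrow \Omega^{[i]}_X$ is an isomorphism for any proper birational $f\colon Y\to X$ from a normal variety. Since the statement is local on $X$ and $f_*\Omega^{[i]}_Y$ is already a subsheaf of $\Omega^{[i]}_X$, the only thing to prove is surjectivity. First I would fix a proper birational morphism $f\colon Y\to X$ and let $V\subseteq X$ be the largest open subset over which $f$ is an isomorphism; its complement has codimension at least two in $X$ since $X$ is normal and $f$ is birational. On $V$ the map $f_*\Omega^{[i]}_Y \to \Omega^{[i]}_X$ is visibly an isomorphism.

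The key point is then a depth/reflexivity comparison. The sheaf $f_*\Omega^{[i]}_Y$ is torsion-free (it injects into $\Omega^{[i]}_X$), and I claim it already satisfies $(S_2)$, hence is reflexive by \cite[Proposition 1.3]{Har80}. To see this, note that $\Omega^{[i]}_Y = \Omega^i_Y$ on the smooth locus of $Y$, and more to the point, pushing forward along $f$ one uses that $f$ is proper birational together with the normality of $Y$; the cleanest route is to observe that $f_*\Omega^{[i]}_Y = (f\circ j_Y)_* \Omega^i_{Y_{\mathrm{reg}}}$ agrees, outside a codimension $\ge 2$ subset of $X$, with $j_{X*}\Omega^i_{X_{\mathrm{reg}}} = \Omega^{[i]}_X$, and then invoke that a pushforward of a coherent sheaf along a proper morphism from a scheme satisfying appropriate depth conditions inherits $(S_2)$. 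Concretely: for $x\in X$ with $\dim \sO_{X,x}\ge 2$, local duality / the depth of $\Omega^i_Y$ along fibers forces $\mathrm{depth}_{\sO_{X,x}}(f_*\Omega^{[i]}_Y)_x \ge 2$.

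Granting that both $f_*\Omega^{[i]}_Y$ and $\Omega^{[i]}_X$ are reflexive $\sO_X$-modules (the latter by definition, as a pushforward from the regular locus, using that $X$ is normal so $\Omega^{[i]}_X = j_{X*}\Omega^i_{X_{\mathrm{reg}}}$ is $(S_2)$), the inclusion $f_*\Omega^{[i]}_Y \hookrightarrow \Omega^{[i]}_X$ is a map of reflexive sheaves that is an isomorphism in codimension one (indeed an isomorphism over $V$, whose complement has codimension $\ge 2$). A morphism of reflexive sheaves on a normal variety that is an isomorphism outside a closed subset of codimension $\ge 2$ is an isomorphism: both sides equal the pushforward from $V$ of their common restriction. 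Hence $f_*\Omega^{[i]}_Y \cong \Omega^{[i]}_X$, and since $f$ was arbitrary, $X$ satisfies the regular extension theorem for $i$-forms. The main obstacle is the second step — establishing that $f_*\Omega^{[i]}_Y$ itself satisfies $(S_2)$; once one knows the hypothesis on $\Omega^{[i]}_X$ is really the statement that the target is reflexive, and that the source is reflexive for the structural reason above, the rest is the standard "isomorphism in codimension one between reflexive sheaves" argument. I would make the depth estimate for $f_*\Omega^{[i]}_Y$ precise using that $\Omega^i_Y$ is locally free on $Y_{\mathrm{reg}}$ whose complement is small, combined with $\mathrm{depth}$ being computed after pushforward via the Leray spectral sequence or the characterization of $(S_2)$ via local cohomology vanishing in degrees $0,1$.
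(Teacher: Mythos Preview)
Your proposal contains a genuine gap: you have misread the hypothesis. The assumption is that $\Omega^i_X$ --- the sheaf of K\"ahler differentials, which need not be reflexive --- satisfies $(S_2)$, not that $\Omega^{[i]}_X$ does. The latter is always reflexive on a normal variety, so reading the hypothesis that way makes it vacuous; and indeed your argument never invokes the hypothesis anywhere. If your argument worked as written it would prove the regular extension theorem for \emph{every} normal variety, which is false (see e.g.\ Example~\ref{example:C_2-quotient} or the cone examples in Theorem~\ref{thm:Graf's example}).

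The specific step that fails is the claim that $f_{*}\Omega^{[i]}_Y$ satisfies $(S_2)$ for structural reasons. This is exactly the content of the extension theorem and does not follow from general depth or Leray--spectral--sequence considerations: $\Omega^{[i]}_Y$ being reflexive on $Y$ gives no control on the depth of its pushforward at a point $x\in X$ lying under a positive-dimensional fiber. The correct use of the hypothesis is the following, which is how the paper proceeds. Since $\Omega^i_X$ satisfies $(S_2)$, the natural map $\Omega^i_X\to j_{*}\Omega^i_{X_{\reg}}=\Omega^{[i]}_X$ is an isomorphism. One then has the factorization
\[
\Omega^i_X \xrightarrow{\,f^{*}\,} f_{*}\Omega^i_Y \longrightarrow f_{*}\Omega^{[i]}_Y \hookrightarrow \Omega^{[i]}_X,
\]
whose composite is the natural isomorphism $\Omega^i_X\cong\Omega^{[i]}_X$. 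Hence the restriction map $f_{*}\Omega^{[i]}_Y\hookrightarrow\Omega^{[i]}_X$ is surjective, and therefore an isomorphism. The key point you are missing is the map $f^{*}\colon\Omega^i_X\to f_{*}\Omega^i_Y$, which only exists for the genuine (non-reflexive) K\"ahler differentials; this is precisely what the hypothesis $\Omega^i_X\cong\Omega^{[i]}_X$ buys.
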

\begin{proof}
Let $j\colon U\hookrightarrow X$ be the inclusion of the smooth locus.
Let $f\colon Y\to X$ be a proper birational morphism from a normal variety $Y$.
Then we have a composition of natural maps
\[
\Omega_X^{i}\xrightarrow{f^{*}} f_{*}\Omega^{i}_Y\rightarrow f_{*}\Omega^{[i]}_Y \hookrightarrow \Omega_X^{[i]},
\]
which coincides with the pullback map $j^{*}\colon\Omega^{i}_X\hookrightarrow j_{*}\Omega_{U}^{i}$, and thus is an isomorphism by assumption.
Therefore, the restriction map $f_{*}\Omega^{[i]}_Y \hookrightarrow \Omega_X^{[i]}$ is an isomorphism.
\end{proof}
\begin{rem}
Let $X$ be a normal locally complete intersection variety and $i\geq 0$ an integer.
If the singular locus of $X$ has codimension at least $i+2$, then $\Omega_X^{i}$ is reflexive by \cite[Lemma 3.11]{Sato-Takagi}.
Therefore, by Lemma \ref{lem:ext thm for S_2 differential}, $X$ satisfies the regular extension theorem for $i$-forms.
This shows that Flenner's extension theorem \cite{Flenner88} holds for normal locally complete intersection varieties over a perfect field of positive characteristic.
\end{rem}

\begin{proof}[Proof of Theorem \ref{Introthm:tame quotient}]
  The assertion follows from Theorem \ref{thm:tame quotients} and Lemma \ref{lem:ext thm for S_2 differential}.
\end{proof}

\section*{Acknowledgements}
The author expresses his gratitude to Adrian Langer, Teppei Takamatsu, Burt Totaro, Jakub Witaszek, and Shou Yoshikawa for helpful conversations.
He would like to thank the anonymous referee for valuable suggestions.
He is also grateful to Yuya Matsumoto, Hiromu Tanaka, and Fuetaro Yobuko for their useful discussion and comments.
This work was supported by JSPS KAKENHI Grant number JP22J00272.

\newcommand{\etalchar}[1]{$^{#1}$}


\end{document}